\documentclass[10pt]{amsart}

\usepackage{amssymb,hyperref}
\usepackage{amsmath}
\usepackage{amsfonts}
\usepackage[all]{xy}

\newtheorem{thm}{Theorem}[section]

\newtheorem{cor}[thm]{Corollary}	

\newtheorem{defi}[thm]{Definition}
\newtheorem{remark}[thm]{Remark}
\newtheorem{example}[thm]{Example}
\numberwithin{equation}{section}

\newcommand{\C}{\mathbb{C}}
\newcommand{\R}{\mathbb{R}}

\newcommand{\Z}{\mathbb{Z}}
\newcommand{\g}{\mathfrak{g}}

\begin{document} 

\title[ ]{Hitchin sections  on $3$-dimensional Sasakian manifolds}

\author[H. Kasuya]{Hisashi Kasuya}

\address{Graduate School of Mathematics,
Nagoya University}

\email{kasuya@math.nagoya-u.ac.jp}

\subjclass[2010]{53C25, 53C07, 57K35, 58D27}

\keywords{Sasakian manifold, non-abelian Hodge correspondence, representation of the fundamental group, moduli space, deformation}

\begin{abstract}
By using non-abelian Hodge correspondence on compact Sasakian manifolds, we investigate analogous constructions of Hitchin sections of rank $2$ on $3$-dimensional  compact Sasakian manifolds.
We obtain    canonical deformations of $\widetilde{SL_{2}(\R)}$-Sasakian structures giving a diffeomorphism from  the vector space of 
basic quadratic differentials onto a connected component of the space of equivalent classes of sasakian structures of negative basic first Chern classes and topologically trivial CR structures equipped with an appropriate manifold structure.

\end{abstract}

\maketitle

\section{Introduction}
 On a compact Riemannian surface   $\Sigma$ of genus $g(\Sigma)\ge 2$,
 as a non-abelian version  of Hodge theory 
\[H^{1}(\Sigma, \Z)\otimes \C=H^{1,0}(\Sigma)\oplus H^{0,1}(\Sigma),
\]
the non-abelian Hodge correspondence (\cite{Cor, Hit, Si1, Si2}) implies  that 
moduli spaces ${\mathcal M}_{B}$ of representations of the fundamental group $\pi_{1}(\Sigma,x)$ (Betti Moduli)   are "isomorphic" to moduli spaces ${\mathcal M}_{Dol}$ of Higgs bundles (Dolbeault Moduli).
As an analogue of the  projection $H^{1}(\Sigma, \Z)\otimes \C=H^{1,0}(\Sigma)\oplus H^{0,1}(\Sigma)\to H^{1,0}(\Sigma)$, applying the non-abelian Hodge correspondence we obtain the canonical maps $\tau: {\mathcal M}_{B}\to V$ from ${\mathcal M}_{B}$ to vector spaces $V$ of certain dimensions.
Such canonical maps are called the  Hitchin fibrations.
For a "nice" moduli space ${\mathcal M}_{B}$, the Hitchin fibration $\tau: {\mathcal M}_{B}\to V$ is proper (\cite{Hit, Si3, Si4}).
In particular, in case $ {\mathcal M}_{B} $ is the moduli space of  representations into $PSL_{2}(\C)$, the Hitchin fibration $\tau: {\mathcal M}_{B}\to V$ is proper and surjective.
In this case, $V$ is the vector space of quadratic differentials on $\Sigma$ which is  a real vector space of dimension $6g(\Sigma)-6$.
Moreover, in this case, Hitchin (\cite{Hit}) constructs a section $\sigma:V=\R^{6g(\Sigma)-6}\to {\mathcal M}_{B}$ and $\sigma$ sends $\R^{6g(\Sigma)-6}$ diffeomorphically onto a connected component of  the moduli space  ${\rm Hom}^s(\pi_{1}(\Sigma,x), SL_{2}(\R))/SL_{2}(\R)$ of simple $SL_{2}(\R)$-representations.
This gives a new interpretation of the classical fact: deformations of hyperbolic  Riemannian surfaces corresponds to quadratic differentials.
In \cite{Hit2}, Hitchin generalize this section to certain  higher rank cases.
Recently constructions   of sections of Hitchin fibrations for representations of $\pi_{1}\Sigma$ into several Lie groups and geometric interpretations of them are actively studied (see \cite{Gar} and references therein).

We develop non-abelian Hodge correspondence on compact Sasakian manifolds which are  odd-dimensional counter parts of K\"ahler manifolds (\cite{BK, BK2}).
The purpose of this paper is to focus on the non-abelian Hodge correspondence on $3$-dimensional  Sasakian manifolds $M$  for  studying  sections of "Hitchin fibrations" $\tau: {\rm Hom}^s(\pi_{1}(M,x), SL_{2}(\C))/SL_{2}(\C) \to H^{1,0}_{B}(M, T^{1,0\ast}_M)$ given in \cite{Ka}.
We can obtain the following result by analogous arguments  in \cite{Hit, Hit2} that is the non-abelian Hodge correspondence for cyclic Higgs bundles (see \cite{Li}).

\begin{thm}\label{hisec}
Let $(M,\, T^{1,0}_{M} ,\, \eta) $ be a compact  $3$-dimensional Sasakian manifold such that  $c_{1, B}(T^{1,0})=-C[d\eta]$ for some positive constant $C$ and 
 $c_{1}(T^{1,0}M)=0$ i.e. $T^{1,0}M$ is  smoothly   trivial.
 There exists a smooth 
map  $\sigma:H^{1,0}_{B}(M, T^{1,0\ast}_M)\to  {\rm Hom}^s(\pi_{1}M, SU(1,1)))/SU(1,1)$ which gives a diffeomorphism   from a vector space $H^{1,0}_{B}(M, T^{1,0\ast}_M)$ onto a connected component of ${\rm Hom}^s(\pi_{1}(M,x), SU(1,1)))/SU(1,1)$.

\end{thm}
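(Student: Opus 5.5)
We mimic Hitchin's construction, using the non-abelian Hodge correspondence for basic Higgs bundles on compact Sasakian manifolds (\cite{BK, BK2}). The central input is a basic square root of the canonical bundle. Since $c_{1}(T^{1,0}M)=0$, the bundle $K=T^{1,0\ast}_M$ is smoothly trivial, and we will use this to choose a basic holomorphic line bundle $L$ with $L^{2}\cong K$ as a transversally holomorphic (basic) bundle. Concretely, we pick a Hermitian structure and a basic connection on the trivial line bundle whose curvature is $\frac12$ of that of $K$. The hypothesis $c_{1,B}(T^{1,0})=-C[d\eta]$ makes the basic class of such a connection representable, and smooth triviality removes the torsion obstruction to taking the square root. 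We then set $E=L\oplus L^{-1}$, of rank $2$ with trivial determinant. For each basic quadratic differential $q\in H^{1,0}_{B}(M,T^{1,0\ast}_M)$ (basic holomorphic sections of $K^{2}$), we define the basic Higgs field
\[
\theta_q=\begin{pmatrix}0&q\\ 1&0\end{pmatrix},
\]
where $1\in H^0_B(L^{-2}\otimes K)=H^0_B(\mathcal O)$.

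The second step is stability. The only $\theta_q$-invariant basic subbundle candidates are of the form $L$ or line subbundles of $E$. Now $L^{-1}$ is not invariant because the lower-left entry is nonzero. The degree of $L$ with respect to $d\eta$ is negative, since $\deg_B K<0$ by the hypothesis $c_{1,B}=-C[d\eta]$, which forces genus-$\geq2$-like behaviour of the transverse structure. A standard argument therefore shows that every $\theta_q$-invariant line subbundle has negative basic degree, so $(E,\theta_q)$ is stable. The Sasakian Hitchin--Simpson theorem then gives a harmonic metric $h_q$ and a flat $SL_2(\C)$-connection $D_q=\nabla_{h_q}+\theta_q+\theta_q^{\ast}$. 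Its monodromy is simple, which defines $\sigma(q)$.

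The third step shows that the image lies in $SU(1,1)$. The involution $\tau=\mathrm{diag}(1,-1)$ satisfies $\tau\theta_q\tau^{-1}=-\theta_q$. By uniqueness of the harmonic metric, $h_q$ is diagonal, $h_q=h_L\oplus h_L^{-1}$. The indefinite form $\langle\cdot,\cdot\rangle=h_q(\tau\cdot,\cdot)$ is therefore $D_q$-parallel, so the monodromy preserves a Hermitian form of signature $(1,1)$ and lands in $SU(1,1)$. Smoothness of $\sigma$ follows from smooth dependence of $h_q$ on $q$ via the implicit function theorem for the (basic) Hitchin equation, whose linearization is invertible at stable points. Injectivity follows from the fact that the correspondence is a bijection onto isomorphism classes of stable Higgs bundles, together with the observation that $(E,\theta_q)\cong(E,\theta_{q'})$ forces $q=q'$. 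Indeed, an automorphism of $L\oplus L^{-1}$ preserving the form of $\theta$ is diagonal up to $\pm$, and it fixes the entry $1$, hence acts trivially on $q$. Composing with the Hitchin fibration $\tau$ of \cite{Ka}, which sends $(E,\theta)$ to $\det\theta$ (i.e.\ $-q$), gives $\tau\circ\sigma=-\mathrm{id}$. Hence $\sigma$ is a proper injective immersion, and therefore a closed embedding.

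The final and hardest step is openness: showing that the image is an open subset of ${\rm Hom}^s(\pi_1M,SU(1,1))/SU(1,1)$, and hence, being closed and connected, a whole connected component. To do this, we would take any $SU(1,1)$-representation near $\sigma(q)$. Its associated stable basic Higgs bundle carries the $\tau$-symmetry, so it splits as $E=L'\oplus L'^{-1}$ with off-diagonal Higgs field $\begin{pmatrix}0&a\\ b&0\end{pmatrix}$. By continuity, $b\in H^0_B(L'^{-2}K)$ stays nonzero, and a degree count using the Milnor--Wood type bound $\deg L'\le \frac12\deg K$ forces $b$ to be nowhere vanishing, so that $L'^2\cong K$. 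We expect this to be the main obstacle. In the Sasakian setting, the degree bound, the count of basic zeros, and the choice of square root must all be handled with basic cohomology rather than ordinary line bundle theory. One also has to check that the finitely many square roots (torsion choices) are separated into different components, which is where smooth triviality of $T^{1,0}M$ is used. Once $L'\cong L$ (or one fixed square root), a gauge transformation normalizes $b=1$, so the representation equals $\sigma(a)$. A dimension count of the basic deformation complex (the tangent space equals basic $H^1$ of the Higgs complex, whose $SU(1,1)$ part has dimension $\dim_\R H^{1,0}_B(K^2)$) confirms that $\sigma$ is a local diffeomorphism.
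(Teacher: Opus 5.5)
Your outline is the paper's proof: cyclic Higgs bundles on $K^{1/2}\oplus K^{-1/2}$, a harmonic metric made diagonal by the $\mathrm{diag}(1,-1)$ symmetry so that the monodromy lies in $SU(1,1)$, $\sigma$ a section of the proper Hitchin map $\tau$ of \cite{Ka} and therefore a closed embedding, and finally openness. The only structural difference is that the paper first passes to the $\widetilde{SL_{2}(\R)}$-Sasakian structure of Theorem \ref{uni}. There the square root is the explicit bundle $E_{\frac{1}{2\pi}}$ and all formulas are explicit. Your abstract square root also works, and it needs only smooth triviality of $K$: halve the connection form of the Chern connection of $K$ in a global unitary frame.

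\textbf{Sign error in the stability step.} Since $c_{1,B}(T^{1,0}_M)=-C[d\eta]$ with $C>0$, you have $c_{1,B}(K)=C[d\eta]$. Hence $\deg_B K>0$ and $\deg L=\frac12\deg_B K>0$, not $<0$. Moreover, the lower-left entry $1$ is what stops $L$, not $L^{-1}$, from being invariant. In fact $L^{-1}$ is $\theta_q$-invariant when $q=0$. With your signs, $L^{-1}$ would have positive degree and would destabilize $(E,\theta_0)$. The correct argument is this: $L$ is not $\theta_q$-invariant, and every other line subbundle projects nontrivially to $L^{-1}$, so its degree is at most $\deg L^{-1}<0$.

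\textbf{Openness.} This is not the hard step. You can drop the Milnor--Wood/degree analysis, and with it the worry about torsion square roots, which is irrelevant for an existence statement. The paper obtains openness exactly from the dimension count you mention in your last sentence.

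At the base point, compute with the spectral sequence of the basic Dolbeault--Higgs double complex of ${\rm End}_0(E)$. The inputs are basic Kodaira vanishing $H^{0,0}_B(M,T^{1,0}_M)=H^{1,1}_B(M,T^{1,0\ast}_M)=0$, the isomorphisms $d_1$ induced by $\theta$, and basic Serre duality. Together with the real-form argument, this gives
$\dim_{\R}{\rm Hom}^s(\pi_1M,SU(1,1))/SU(1,1)=2\dim_{\C}H^{1,0}_B(M,T^{1,0\ast}_M)=\dim_\R H^{1,0}_B(M,T^{1,0\ast}_M)$
on the component containing $\sigma(0)$.

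So $\sigma$ is a proper injective immersion between manifolds of equal dimension. By invariance of domain its image is open. It is also closed and connected, hence a connected component.
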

We notice that $SL_{2}(\R)\cong SU(1,1)$.
It is known  that  a compact  $3$-dimensional Sasakian  manifold $M$ under the assumption in  Theorem \ref{hisec} is a Seifert fibration over a complex $1$-dimensional orbifold $X$.
$H^{1,0}_{B}(M, T^{1,0\ast}_M)$ is the space of basic quadratic differentials on $M$ and which is isomorphic to the space of quadratic differentials on  $X$.
We can say that the real dimension of $H^{1,0}_{B}(M, T^{1,0\ast}_M)$ is $6g(X)-6+2n(X)$ where $g(X)$ is  the genus of  $X$ and $n(X)$  the number of orbifold points of $X$.
Thus Theorem \ref{hisec} can be seen as an orbifold analogue of Hitchin sections (cf. \cite{NS}).

We are more interested in Sasakian geometric interpretations of the map
\[\sigma:H^{1,0}_{B}(M, T^{1,0\ast}_M)\to  {\rm Hom}^s(\pi_{1}M, SU(1,1)))/SU(1,1).\]
The key idea is to take a "lifting".
Consider the universal covering group   $p: \widetilde{SL_{2}(\R)}\to SL_{2}(\R)\cong SU(1,1)$.
Denote by ${\mathcal R}(\pi_{1}(M,x), \widetilde{SL_{2}(\R)})$ the  subset in  ${\rm Hom}(\pi_{1}(M,x), \widetilde{SL_{2}(\R)})$ consisting of  representations  $\rho\in {\rm Hom}(\pi_{1}(M,x), G)$ such that $\rho$ are  faithful and  have cocompact discrete images.
Let  \[\pi: {\mathcal R}(\pi_{1}(M,x), \widetilde{SL_{2}(\R)})/\widetilde{SL_{2}(\R)}\to {\rm Hom}^s(\pi_{1}(M,x), SU(1,1))/SU(1,1)
\]
be the map associated with the covering map $p: SL_{2}(\R)\cong SU(1,1)$.

\begin{thm}
Under the same assumption in Theorem  \ref{hisec},
 there exists a 
 map  $\widetilde{\sigma}: H^{1,0}_{B}(M, T^{1,0\ast}_M)\to {\mathcal R}(\pi_{1}(M,x), \widetilde{SL_{2}(\R)})/\widetilde{SL_{2}(\R)}$ such that $\pi\circ \widetilde{\sigma}=\sigma$ and $\widetilde{\sigma}$ is a diffeomorphism  from a vector space $H^{1,0}_{B}(M, T^{1,0\ast}_M)$ onto a connected component of ${\mathcal R}(\pi_{1}(M,x), \widetilde{SL_{2}(\R)})/\widetilde{SL_{2}(\R)}$.
\end{thm}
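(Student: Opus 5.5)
The plan is to construct $\widetilde{\sigma}$ by lifting the Hitchin section of Theorem \ref{hisec} through the covering $p:\widetilde{SL_{2}(\R)}\to SL_{2}(\R)\cong SU(1,1)$, and to realize each lift geometrically as the holonomy of a $\widetilde{SL_{2}(\R)}$-Sasakian structure. For $q\in H^{1,0}_{B}(M,T^{1,0\ast}_M)$, the representation $\sigma(q)$ comes from the cyclic Higgs bundle $E=K^{1/2}\oplus K^{-1/2}$ with Higgs field $\theta=\begin{pmatrix}0&q\\1&0\end{pmatrix}$. Here $K=T^{1,0\ast}_M$, and the square root $K^{1/2}$ exists because $c_{1}(T^{1,0}M)=0$. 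The solution $h_q$ of the basic Hitchin equation is diagonal. The associated flat connection $D_q=\nabla_{h_q}+\theta+\theta^{*_{h_q}}$ reduces to $SU(1,1)$ and gives a developing map into $SU(1,1)/U(1)=\mathbb{H}^2$ which is equivariant and transverse to the Reeb foliation. The off-diagonal entry of $\theta$ makes the transverse part of the developing map a local diffeomorphism on the leaf space.

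Next I lift this to the unit tangent bundle. Since $T^{1,0}M$ is smoothly trivial and the Reeb flow gives a circle direction, the frame of $K^{-1/2}$ together with the Reeb orbit defines a developing map $\widetilde{D}:\widetilde{M}\to PSL_{2}(\R)=T^{1}\mathbb{H}^2$, which lifts to $\widetilde{SL_{2}(\R)}$. This map is equivariant for a homomorphism $\widetilde{\rho}_q:\pi_{1}(M,x)\to\widetilde{SL_{2}(\R)}$ with $p\circ\widetilde{\rho}_q=\sigma(q)$. One then checks that $\widetilde{D}$ is a local diffeomorphism: the transverse part comes from the Higgs field, and the vertical part comes from the contact form $\eta$, using $c_{1,B}=-C[d\eta]$ to match the curvature of $K^{-1/2}$ with $d\eta$. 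By compactness of $M$ and completeness of the pulled-back (left-invariant type) metric, $\widetilde{D}$ is a covering and hence a diffeomorphism. Therefore $\widetilde{\rho}_q$ is faithful with cocompact discrete image, so it lies in ${\mathcal R}$. Choosing the lift continuously in $q$, which is possible because $H^{1,0}_{B}$ is contractible and the set of lifts is a torsor under ${\rm Hom}(\pi_1M,Z(\widetilde{SL_2(\R)}))$ and hence discrete, defines $\widetilde{\sigma}$ with $\pi\circ\widetilde{\sigma}=\sigma$.

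For the diffeomorphism onto a component, I argue as follows. The map $\pi$ is a local diffeomorphism: lifts through a central covering are locally unique, and conjugation by $\widetilde{SL_2(\R)}$ factors through $SL_2(\R)$ up to the center, which acts trivially. Since $\sigma$ is a diffeomorphism onto a component $\mathcal{C}$ by Theorem \ref{hisec}, $\widetilde{\sigma}$ is an injective local diffeomorphism. The image is open, and it remains to show it is closed in ${\mathcal R}/\widetilde{SL_{2}(\R)}$. If $\widetilde{\sigma}(q_n)\to\widetilde{\rho}$ in ${\mathcal R}/\widetilde{SL_2(\R)}$, then $\sigma(q_n)\to\pi(\widetilde{\rho})$, which lies in $\mathcal{C}$ since $\mathcal{C}$ is a component. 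Hence $q_n\to q=\sigma^{-1}(\pi(\widetilde\rho))$, and by continuity and discreteness of lifts $\widetilde{\sigma}(q)=\widetilde\rho$.

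The main obstacle is the middle step: showing that the lifted developing map is a global diffeomorphism, so that $\widetilde{\rho}_q$ really lands in ${\mathcal R}$. If completeness is hard to obtain directly, I would first try an Euler-number or degree argument, using that $c_1=0$ and $c_{1,B}=-C[d\eta]$ fix the Seifert invariants. Then I would invoke the classification of $\widetilde{SL_2(\R)}$-geometry Seifert manifolds to deduce discreteness and cocompactness.
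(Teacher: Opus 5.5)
\textbf{There is a genuine gap, and it sits in the step you pass over quickly, not the one you flag.} That step is the analytic heart of the paper's proof.

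Your lifted developing map is best obtained by integrating the $\mathfrak{su}(1,1)$-valued connection form on $\widetilde{M}$. After the reduction of Theorem \ref{uni}, this form has diagonal entries $\pm\sqrt{-1}\eta_\alpha$ and off-diagonal entries $w_\alpha,\overline{w_\alpha}$, where $\eta_\alpha=\eta-\frac{1}{2}d^{c}_{B}\log(h_\alpha)$ and $w_\alpha=h_\alpha^{-2}w+h_\alpha^{2}\overline{\alpha}\,\overline{w}$. The map is a local diffeomorphism exactly when $(\eta_\alpha,w_\alpha,\overline{w_\alpha})$ is a coframe. Since $\eta_\alpha(\xi)=1$ and $w_\alpha(\xi)=0$, the vertical direction causes no trouble. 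The condition is therefore that $w_\alpha\wedge\overline{w_\alpha}=(h_\alpha^{-4}-h_\alpha^{4}|\alpha|^{2})\,w\wedge\overline{w}$ vanishes nowhere.

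The constant entry $1$ of $\theta$ only guarantees that the $(1,0)$-part $h_\alpha^{-2}w$ is nonzero. The $(0,1)$-part $h_\alpha^{2}\overline{\alpha}\,\overline{w}$, coming from the quadratic differential, could a priori have the same modulus. So your claim that the off-diagonal entry makes the transverse part a local diffeomorphism is unjustified. What is needed is the strict pointwise estimate $h_\alpha^{8}|\alpha|^{2}<1$. The paper proves it as follows:
\begin{itemize}
\item it rewrites the Hitchin equation $2\Delta_{B}\log(h_\alpha)=h_\alpha^{-4}-h_\alpha^{4}|\alpha|^{2}-1$ as an equation for $\log(h_\alpha^{8}|\alpha|^{2})$ away from the zeros of $\alpha$;
\item the right-hand side is $4h_\alpha^{-4}(h_\alpha^{8}|\alpha|^{2}-1)$ plus a positive constant;
\item the maximum principle at an interior maximum then forces $\max h_\alpha^{8}|\alpha|^{2}<1$. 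This is the analogue of Li's Claim 6.1 in the surface case.
\end{itemize}
Your proposal contains no substitute for this estimate.

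\textbf{The step you flag as the main obstacle is routine once the coframe is nondegenerate.} The triple $(\eta_\alpha,{\rm Re}\,w_\alpha,{\rm Im}\,w_\alpha)$ satisfies the Maurer--Cartan equations of $\mathfrak{sl}_{2}(\R)$ on the compact manifold $M$. Hence the metric making it orthonormal is complete. The developing map is then a local isometry from a complete manifold onto the simply connected group $\widetilde{SL_{2}(\R)}$, so it is a covering and hence a diffeomorphism; this is the content of the paper's section on $G$-structures.

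Your Euler-number or Seifert-classification fallback does not repair the local problem either. It could only replace the analytic argument if you proved a Goldman-type theorem that every representation in the component of $\sigma(0)$ is discrete and faithful. That would be a different and substantially harder route.

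\textbf{The rest of your plan is sound.} The continuous choice of lift, the local injectivity of $\pi$, and the open--closed argument showing the image is a whole component all work. They make explicit what the paper leaves implicit in its final section.
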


We notice that the set $ {\mathcal R}(\pi_{1}M, \widetilde{SL_{2}(\R)})/\widetilde{SL_{2}(\R)}$ is identified with the set of "equivalence"  classes of Sasakian structures  $(T^{1,0}_{M} ,\, \eta) $ on $M$ such that $c_{1, B}(T^{1,0})=-C[d\eta]$ for some positive constant $C$ and 
 $c_{1}(T^{1,0}_M)=0$.
For a Sasakian structure $(T^{1,0}_{M} ,\, \eta) $ on  a compact  $3$-dimensional  manifold  $M$ such that $c_{1, B}(T^{1,0})=-C[d\eta]$ for some positive constant $C$ and 
 $T^{1,0}_M$ is smoothly   trivial,   $(T^{1,0}_{M} ,\, \eta) $ is "equivalent"  to  a canonical $ \widetilde{SL_{2}(\R)}$-Sasakian structure i.e. a left-invariant Sasakian structure on $\Gamma\backslash \widetilde{SL_{2}(\R)}$ (see \cite{Bel, KM} and Theorem \ref{uni}). 
The  lifted  Hitchin section $\widetilde{\sigma}: H^{1,0}_{B}(M, T^{1,0\ast}_M)\to {\mathcal R}(\pi_{1}(M,x), \widetilde{SL_{2}(\R)})/\widetilde{SL_{2}(\R)}$ is regarded as deformations of a $ \widetilde{SL_{2}(\R)}$-Sasakian structure.
$\widetilde{\sigma}: H^{1,0}_{B}(M, T^{1,0\ast}_M)\to {\mathcal R}(\pi_{1}(M,x), \widetilde{SL_{2}(\R)})/\widetilde{SL_{2}(\R)}$ is induced by the following.

\begin{thm}[\rm see Theorem \ref{secdef} for the detail]
Under the same assumption in Theorem  \ref{hisec}, 
there exists a canonical  smooth family $(\eta_{\alpha}, w_{\alpha})$ of inequivalent  $\widetilde{SL_{2}(\R)}$-Sasakian structures on $M$ parametrized by $ H^{1,0}_{B}(M, T^{1,0\ast}_M)$.
\end{thm}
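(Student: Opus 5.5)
Given a basic quadratic differential $\alpha\in H^{1,0}_{B}(M, T^{1,0\ast}_M)$, I will build the Sasakian structure $(\eta_\alpha,w_\alpha)$ from the harmonic metric of a cyclic basic Higgs bundle, imitating Hitchin's construction for surfaces. The first step fixes the data. The condition $c_1(T^{1,0}_M)=0$, together with the $\widetilde{SL_2(\R)}$-structure (Theorem \ref{uni}), gives a basic holomorphic square root $K^{1/2}$ of $T^{1,0\ast}_M$, which is trivial as a smooth bundle. Set $E=K^{1/2}\oplus K^{-1/2}$ and
\[
\theta_\alpha=\begin{pmatrix}0&\alpha\\ 1&0\end{pmatrix},
\]
where $1$ is the tautological section of $\mathrm{Hom}(K^{1/2},K^{-1/2}\otimes K)$. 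This is a basic $SL_2$-Higgs bundle. It is stable because the only $\theta_\alpha$-invariant subbundle that could destabilize is $K^{-1/2}$, and it is ruled out since $c_{1,B}(K)=C[d\eta]$ has positive degree.

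Second, I will apply the non-abelian Hodge correspondence on compact Sasakian manifolds (\cite{BK,BK2}) to get a basic harmonic metric $h_\alpha$. The diagonal involution $\mathrm{diag}(1,-1)$ maps $\theta_\alpha$ to $-\theta_\alpha$, so uniqueness of $h_\alpha$ forces it to be diagonal, $h_\alpha=\mathrm{diag}(h_\alpha^{1/2},h_\alpha^{-1/2})$ with $h_\alpha$ a basic metric on $K$. The flat connection $D_\alpha=\nabla_{h_\alpha}+\theta_\alpha+\theta_\alpha^{*h_\alpha}$ then preserves a form of signature $(1,1)$, so its monodromy lies in $SU(1,1)$; this is the Hitchin section $\sigma(\alpha)$. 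The Hitchin equation reduces to the scalar basic PDE
\[
F_{h_\alpha}+ (1-|\alpha|^2_{h_\alpha})\, d\eta\text{-type term}=0
\]
on the transverse orbifold $X$. At $\alpha=0$ this is the transverse constant curvature metric, which recovers $\eta$ itself. The implicit function theorem together with uniqueness shows that $h_\alpha$ depends smoothly on $\alpha$.

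Third, I will build the Sasakian structure geometrically. Take the developing map of the flat $SU(1,1)$-bundle and lift it along $p$ to $\widetilde{SL_2(\R)}$. This lift needs the trivialization of $K^{1/2}$, which is where $c_1=0$ enters. Define $w_\alpha$ as the transverse $(1,0)$-form built from the off-diagonal part $1+\bar\alpha h_\alpha$ of $\theta_\alpha+\theta_\alpha^*$; this is the component of the flat connection form along $\mathfrak{sl}_2$ transverse to $\mathfrak{so}(2)$. Define $\eta_\alpha$ as the $\mathfrak{so}(2)$-component, normalized so that $d\eta_\alpha=\sqrt{-1}C'\,w_\alpha\wedge\bar w_\alpha$. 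Flatness, $dA+A\wedge A=0$ for the $\mathfrak{sl}_2(\R)$-valued form $A=(\eta_\alpha,w_\alpha)$, is exactly the structure equation of the left-invariant coframe on $\widetilde{SL_2(\R)}$. Therefore $(\eta_\alpha,w_\alpha)$ is pulled back from a left-invariant Sasakian structure whenever it is a coframe, i.e. whenever $\eta_\alpha\wedge w_\alpha\wedge\bar w_\alpha\neq 0$.

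\textbf{Main obstacle.} The hard part is proving this non-degeneracy for all $\alpha$, not just near $0$. Equivalently, the developing map must be a local diffeomorphism whose image is a cocompact faithful action, so that $\widetilde{\sigma}(\alpha)\in{\mathcal R}$. My first attempt is to apply the maximum principle to the basic PDE to get $|\alpha|_{h_\alpha}<1$, as in Hitchin's argument. This makes $|1|^2-|\alpha|^2$ positive, so $w_\alpha$ is non-degenerate transversally and $\eta_\alpha$ stays a contact form. The developing map is then a local isometry of complete metrics, hence a covering. Compactness of $M$ gives a faithful cocompact lattice.

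Finally, structures for different $\alpha$ are inequivalent because $\alpha$ is recovered canonically from $(\eta_\alpha,w_\alpha)$. It is the holomorphic quadratic differential obtained as the $(2,0)$-part of the transverse metric $w_\alpha\bar w_\alpha$ relative to the original CR structure $T^{1,0}_M$. Hence the family is injective.
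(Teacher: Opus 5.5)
Your construction of $(\eta_{\alpha},w_{\alpha})$ matches the paper's: the cyclic Higgs bundle on $E_{-\frac{1}{2\pi}}\oplus E_{\frac{1}{2\pi}}$, the diagonal harmonic metric, reading the flat connection as an $\mathfrak{sl}_{2}$-valued Maurer--Cartan form, and the maximum principle giving $h_{\alpha}^{8}|\alpha|^{2}<1$, hence $w_{\alpha}\wedge\overline{w_{\alpha}}\neq 0$. Your implicit-function argument for smooth dependence on $\alpha$ is a useful addition. The gap is in your last paragraph, on inequivalence. Reading off $\alpha\, w\otimes w$ as the $(2,0)$-part of $w_{\alpha}\cdot\overline{w_{\alpha}}$ relative to the fixed reference structure $T^{1,0}_{M}$ only shows that $\alpha\mapsto(\eta_{\alpha},w_{\alpha})$ is injective as a map into pairs of forms. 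Equivalence, however, allows an $A^{1}_{B}$-deformation followed by an almost-isomorphism, i.e.\ an arbitrary diffeomorphism $f$. Taking the $(2,0)$-part relative to $T^{1,0}_{M}$ does not commute with $f^{\ast}$ unless $f$ preserves $T^{1,0}_{M}$ modulo $\xi$. So your quantity is not an invariant of the equivalence class, and nothing in your argument rules out $f^{\ast}(\eta_{\beta},w_{\beta})=(\eta_{\alpha},w_{\alpha})$ with $\alpha\neq\beta$.

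You need an invariant of the class, and the paper supplies it in two steps. First, equivalence among $\widetilde{SL_{2}(\R)}$-Sasakian structures collapses to isomorphism of $\widetilde{SL_{2}(\R)}$-structures. A nontrivial rescaling violates the structure equations. An $A^{1}_{B}$-deformation that is again of this type forces $w'=w$ and then $\eta'=\eta$. A Sasakian isomorphism $f$ between two such structures satisfies $f^{\ast}(\eta',w')=(\eta,w)$. Hence classes correspond to holonomies in ${\mathcal R}(\pi_{1}(M,x),\widetilde{SL_{2}(\R)})/\widetilde{SL_{2}(\R)}$. Second, holonomies are separated by the Hitchin fibration. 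By Corlette's uniqueness of the harmonic metric, $\tau([\nabla,\phi])=\det\phi^{1,0}$ is well defined on ${\mathcal M}^{s}_{Bharm}(E,D_{\xi})\cong{\mathcal M}^{s}_{Bflat}(E,D_{\xi})$. Since $\tau(\sigma(\alpha))$ recovers $\alpha\, w\otimes w$ up to sign, $\sigma$ is injective, and so is $\tilde\sigma$ because $\pi\circ\tilde\sigma=\sigma$. Your Hopf-differential recipe is, up to sign, exactly $\tau$ computed with the identity as the equivariant harmonic map. What is missing is the uniqueness statement that makes it descend to equivalence classes. Equivalently, you could invoke uniqueness of the transversally harmonic map in the homotopy class of $f$, for $f$ acting by an inner automorphism on $\pi_{1}(M,x)$; that is the setting in which the identification with ${\mathcal R}/\widetilde{SL_{2}(\R)}$ holds.
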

We notice that this smooth family $(\eta_{\alpha}, w_{\alpha})$ gives an  differentiable   deformation  of the  transverse holomorphic structure associated with the original Sasakian structure $(T^{1,0}_{M} ,\, \eta) $ such  that the Kodaira-Spencer map at the origin  is an isomorphism.
This means  that   the entire  space of infinitesimal deformations of the transverse holomorphic structure gives    a canonical local  coordinate  of the space of equivalence classes of Sasakian structures.

\noindent {\bf Acknowledgement.}  
The author would like to  thank Qiongling Li and Takashi Ono for helpful comments.
The author also would like to thank Hiraku Nozawa for answering author's  questions.
\section{Sasakian geometry}

\subsection{Sasakian structures}

Let $M$ be a $(2n+1)$-dimensional real smooth manifold. A {\em CR-structure} on $M$ is an $n$-dimensional 
complex involutive sub-bundle $T^{1,0}_{M}$ of the complexified tangent bundle $TM_{\C}\,=\, 
TM\otimes_{\mathbb R} {\C}$ such that $T^{1,0}_{M}\cap T^{0,1}_{M}\,=\,\{0\}$, where 
$T^{0,1}_{M}\,=\,\overline{T^{1,0}_{M}}$.
 Let 
$S\,:=\,TM\cap (T^{1,0}_{M}\oplus T^{0,1}_{M})\subset TM$.
We have the almost complex structure $I\,:\, S\,\longrightarrow
\, S$ associated to the decomposition $ S_{\C}\,=\,T^{1,0}_{M}\oplus T^{0,1}_{M}$.

A {\em 
strongly pseudo-convex CR structure} on $M$ is a pair $(T^{1,0}_{M} ,\, \eta) $ consisting
of a CR structure $T^{1,0}_{M}$ and a contact 
$1$-form $\eta$ such that $\ker\eta\,=\,S$ and the bilinear form on $S$ defined by 
$L_{\eta}(X,\,Y)\,=\,d\eta(X, IY)$ is a Hermitian metric on $(S,\, I)$.

A {\em Sasakian structure } is a strongly pseudo-convex CR structure $(T^{1,0}_{M} ,\, \eta) $ such that 
 for  the Reeb vector field $\xi$ associated to the contact $1$-form $\eta$, 
 for any smooth section $X$ of $T^{1,0}_{M}$ the Lie bracket 
$[\xi,\, X]$ is also a section of $T^{1,0}_{M}$.
We consider the $1$-dimensional foliation ${\mathcal F}_{\xi}$ on $M$ generated by $\xi$.
If $(T^{1,0}_{M} ,\, \eta) $ is Sasakian, then $(\xi, T^{1,0}_{M})$ can be seen as  a transverse holomorphic structure
on a foliated manifold  $(M,{\mathcal F}_{\xi})$ and $d\eta$ is a transverse K\"ahler form on ${\mathcal F}_{\xi}$.

Let $(M,\, T^{1,0}_{M} ,\, \eta) $ be a Sasakian manifold.
Consider the unique homomorphism $\Phi_{\xi}\,:\, TM\,\longrightarrow\, TM $ that extends
$I\,:\, S\,\longrightarrow\, S$ satisfies the condition $\Phi_{\xi}(\xi)\,=\,0$.
Then, $\Phi^{2}_{\xi}\,=\,-{\rm Id}+\xi\otimes \eta$.
We call $\Phi_{\xi}$ the {\em almost contact structure} associated
to a Sasakian structure $(T^{1,0}_{M} ,\, \eta) $.
We define the Riemannian metric $$g_{\eta}(X,\,Y)\,=\,\eta(X)\eta(Y)+ d\eta(X,\, \Phi_{\xi} Y).$$
We call $g_{\eta}$ the {\em Sasakian metric} associated to the Sasakian structure $(T^{1,0}_{M} ,\, \eta) $.

\subsection{Basic forms}
Let $(M,\, T^{1,0}_{M} ,\, \eta) $ be a Sasakian manifold.
A differential form $\omega$ on $M$ is called {\em basic} if the equations
\[
i_{\xi}\omega\,=\,0\,=\, i_{\xi} d\omega
\]
hold. We denote by $A^{\ast}_{B}(M)$ the subspace of basic 
forms in the de Rham complex $A^{\ast}(M)$. Then
$A^{\ast}_{B}(M)$ is a sub-complex of the de Rham complex $A^{\ast}(M)$. Denote by $H_{B}^{\ast}(M)$
the cohomology of the basic de Rham complex $A^{\ast}_{B}(M)$. We note that $d\eta\,\in\,
A^{2}_{B}(M)$ and $[d\eta]\,\not=\,0\,\in\, H_{B}^{2}(M)$ if $M$ is compact.
We have the bigrading $A^{r}_{B}(M)_{\C}\,=\,\bigoplus_{p+q=r} A^{p,q}_{B}(M)$ as well as the decomposition
of the exterior differential $$d\big\vert_{A^{r}_{B}(M)_{\C}}\,=\,\partial_{B}+\overline\partial_{B}$$
on $A^{r}_{B}(M)_{\C}$, so that $$\partial_{B}\,:\,A^{p,q}_{B}(M)\,\longrightarrow\,
A^{p+1,q}_{B}(M)\ \ \text{ and }\ \
\overline\partial_{B}\,:\,A^{p,q}_{B}(M)\,\longrightarrow\,
A^{p,q+1}_{B}(M).$$ 
Denote by $H_{B}^{\ast,\ast}(M)$
the cohomology of the basic Dolbeault  complex $(A^{\ast,\ast}_{B}(M),\bar\partial_{B})$.
We  define the real operator $d_{B}^{c}=-\sqrt{-1}(\partial_{B}-\overline\partial_{B}): A^{r}_{B}(M)\to A^{r+1}_{B}(M)$.
We note that $d\eta\,\in\, A^{1,1}_{B}(M)$.

We briefly  review  transverse Hodge theory (\cite{KT}, \cite{EKA}).
Consider the usual Hodge star operator $\ast\,:\, A^{r}(M)\,\longrightarrow\, A^{2n+1-r}(M)$
associated to the Sasakian metric $g_{\eta}$ and the formal adjoint operator
$\delta\,=\,-\ast d\ast \,:\,A^{r}(M)\,\longrightarrow\, A^{r-1}(M)\, .$
We define the basic Hodge star operator  $\star_{B}\,:\, A^{r}_{B}(M)
\,\longrightarrow\,A^{2n-r}_{B}(M)$ by 
$\star_{B}\omega\,=\,\ast(\eta\wedge \omega)$ for $\omega\,\in\, A^{r}_{B}(M)$.
Also define the operators $$\delta_{B}\,=\,-\star_{B}d\star_{B}\,:\,
A^{r}_{B}(M)\,\longrightarrow\, A^{r-1}_{B}(M)\, ,$$
$$\partial_{B}^{\ast}\,=\,-\star_{B}\overline\partial_{B}\star_{B}\,:\,
A^{p,q}_{B}(M)\,\longrightarrow\, A^{p-1,q}_{B}(M)\, ,$$
$$\overline\partial_{B}^{\ast}\,=\,-\star_{B}\partial_{B}\star_{B}\,:\,
A^{p,q}_{B}(M)\,\longrightarrow\, A^{p,q-1}_{B}(M)$$ and
$\Lambda \,=\,-\star_{B}(d\eta\wedge)\star_{B}$.
They are the formal adjoints of $d$, $\partial_{B}$, $\overline\partial_{B}$ and
$(d\eta\wedge)$ respectively for the pairing 
\[A^{r}_{B}(M)\times A^{r}_{B}(M)\,\ni\,
(\alpha,\,\beta)\,\longmapsto\, \int_{M} \eta\wedge\alpha\wedge \star_{B}\beta\, .
\]
Define the Laplacian operators $$\Delta\,=\,d\delta+\delta d:\, A^{r}(M)\,\longrightarrow\, A^{r}(M)\ \ \text{ and }
\ \ \Delta_{B}=d\delta_{B}+\delta_{B}d\,:\,A^{r}_{B}(M)\,\longrightarrow\,
A^{r}_{B}(M).$$ 
For $\omega\,\in\, A^{r}_{B}(M)$, since the relation $\ast\omega
\,=\,(\star_{B}\omega)\wedge \eta$ holds, we have the relation 
\[\delta\omega\,=\,\delta_{B}\omega+\ast (d\eta\wedge \star_{B}\omega)\, .
\]
Thus, for $\omega\,\in\, A^{1}_{B}(M)$, the equality $\delta_{B}\omega
\,=\,\delta\omega$ holds, and hence for $f\,\in\, A^{0}_{B}(M)$, we have
that $\Delta_{B}f\,=\,\Delta f$.
The basic K\"ahler identities
\[[\Lambda, \partial_{B}]\,=\,
-\sqrt{-1}\overline\partial_{B}^{\ast}\ \ \text{ and } \ \
[\Lambda ,\overline\partial_{B}]=\sqrt{-1}\partial_{B}^{\ast}
\]
hold, and these imply the equations
\[\Delta_{B}\,=\,2\Delta_{B}^{\prime}\,=\,2\Delta_{B}^{\prime\prime}\, ,
\]
where $\Delta_{B}^{\prime}\,=\,\partial_{B}\partial_{B}^{\ast}+\partial_{B}^{\ast}\partial_{B}$
and $\Delta_{B}^{\prime\prime}\,=\,\overline\partial_{B}\overline\partial_{B}^{\ast}+
\overline\partial_{B}^{\ast}\overline\partial_{B}$.

\subsection{Equivalence  of Sasakian structures}
A {\em Sasakian-isomorphism} between two Sasakian manifolds is a Contact-CR-diffeomorphism which is equivalent to an isometry preserving the Reeb vector field (see \cite{KM}).
A {\em rescaling} of $( T^{1,0}_{M},\eta)$ is a Sasakian structure $( T^{1,0}_{M\tau},R\eta)$ for a real number $R>0$.
The Reeb vector field of $R\eta$ is $\frac{1}{R}\xi$.
An {\em almost-isomorphism} is between two Sasakian manifolds a diffeomorphism commuting with almost contact structures associated with Sasakian manifolds which is 
equivalent to a Sasakian isomorphism up to rescalings (see \cite{KM}).
We might prefer almost-isomorphism to Sasakian-isomorphism for classifying  Sasakian manifolds with identifying  rescallings.
We   define the set ${\mathfrak S}(M)$ of Sasakian structures on a manifold $M$ as the set of  pairs $(\eta, \Phi_{\xi})$ of contact structures $\eta$ and almost contact structures $\Phi_{\xi}$ associated with Sasakian structures $(T^{1,0}_{M} ,\, \eta) $.
Then, ${\mathfrak S}(M)$ is a set of sections of vector bundle $TM^{\ast}\oplus  {\rm End}(TM)$.

Let $(M,\, T^{1,0}_{M},\,\eta)$ be a Sasakian manifold. An {\em $A^1_{B}$-deformation} of $( T^{1,0}_{M}, 
\,\eta)$ is a Sasakian structure $(T^{1,0}_{M\tau},\,\eta^{\tau})$ such that $\eta^{\tau}\,=\,\eta+\tau$ for 
$\tau\,\in\, A^1_{B}$ and $$T^{1,0}_{M\tau}\,=\,\{X-\tau(X)\xi\,\,\big\vert\ \, X\,\in\, T^{1,0}_{M}\}.$$ 
The Reeb vector field of $\eta^{\tau}$ is $\xi$, and the transverse holomorphic structure 
$(\xi, T^{1,0}_{M\tau})$  is same as 
$(\xi, T^{1,0}_{M})$.
The almost contact structure  $\Phi_{\xi}^{\tau}$ of $(T^{1,0}_{M\tau},\,\eta^{\tau})$ is $\Phi_{\xi}-\xi\otimes \tau\circ \Phi_{\xi}$.
We notice that   the basic de Rham complex $A^{\ast}_{B}(M)$  associated with  $(T^{1,0}_{M},\,\eta)$ is same as the one associated with  a  $A^1_{B}$-deformation $(T^{1,0}_{M\tau},\,\eta^{\tau})$.
We have $[d\eta]_{B}=[d\eta^{\tau}]_{B}$ in the basic cohomology $H^{2}_{B}(M)$.
By $dd^{c}_{B}$-Lemma we have a basic function $f\in A^{0}_{B}(M)$ such that $d(\eta-\eta^{\tau})=dd^c_{B}f$.
Hence, we have a closed $1$-form $\phi\in A^1_{B}$ such that $\tau=d^c_{B}f+ \phi$.
 If another contact structure $\eta^{\prime}$ has the Reeb vector field $\xi$, and 
$d\eta^{\prime}$ is transverse K\"ahler for $T^{1,0}_{M}\oplus \langle \xi\rangle$, then we have a 
$A^1_{B}$-deformation  $( T^{1,0}_{M\tau},\,\eta^{\tau})$ for $\tau\,=\,\eta-\eta^{\prime}$.

We say that  two Sasakian structures  $(\eta, \Phi_{\xi}),  (\eta^{\prime}, \Phi^{\prime}_{\xi^{\prime}}) \in  {\mathfrak S}(M)$ are {\em equivalent} if an  $A^1_{B}$-deformation  $(\eta^{\tau}, \Phi_{\xi}^{\tau})$ of  $(\eta, \Phi_{\xi})$ is almost-isomorphic to  $(\eta^{\prime}, \Phi^{\prime}_{\xi^{\prime}})$.

\subsection{Deformation theory of  transverse holomorphic structures}

Notice that $A^1_{B}$-deformations  of  $(T^{1,0}_{M},\,\eta)$   do not change the  transverse holomorphic structure $(\xi,  T^{1,0}_{M})$.
We consider  the Kodaira-Spencer deformation theory of transverse holomorphic structures in \cite{GHS}.
As an analogue of infinitesimal deformations of complex structures, the space of  infinitesimal deformations of a transverse holomorphic structure $(\xi, \xi\oplus T^{1,0}_{M})$ is the finite-dimensional cohomology  $H^{1}(M, \Theta)$ with values in the sheaf $\Theta$ of transversely holomorphic vector fields.
For a   differential (resp. holomorphic)  deformation family   $(\xi_{t}, \xi_{t}\oplus T^{1,0}_{M_{t}})_{t\in B}$ of  transverse holomorphic structures over a smooth manifold (resp. complex analytic space) $B$, we have the canonical $\R$-linear  (resp $\C$-linear) map  
 $T_{0}B\to  H^{1}(M, \Theta)$.
This map is called the Kodaira-Spencer map for a deformation family  $(\xi_{t}, \xi_{t}\oplus T^{1,0}_{M_{t}})_{t\in B}$.

Assume that a Sasakian structure $(T^{1,0}_{M},\,\eta)$ is {\em quasi-regular} that is the Reeb vector field $\xi$ generates a locally free $S^{1}$-action on $M$.
The quotient space $X=M/S^{1}$ admits a canonical complex orbifold structure.
We consider the  Kodaira-Spencer-Kuranishi deformation theory of the  complex orbifold $X$.
If $H^{0}(X, T^{1,0}X)=0$ and $H^{2}(X, T^{1,0}X)=0$,  then $H^{1}(M, \Theta)\cong H^{1}(X, T^{1,0}X)$ 
 (see \cite[Section  4.2]{GHS}) and hence   infinitesimal deformations of the  transverse holomorphic structure $(\xi, \xi\oplus T^{1,0}_{M})$ are  identified with the liftings of infinitesimal deformations of the complex orbifold $X$.

\begin{remark}
Unlike the Kodaira-Spencer  stability theorem (\cite{KS}) of K\"ahler structures under small deformations, the existence of compatible Sasakian structures of small deformations of a  transverse holomorphic structure associated with a Sasakian structure does not hold in general.
If  $H^{0,2}_{B}(M)=0$ (in particular $\dim M=3$), in \cite{Noz}, Nozawa proves  the existence of compatible Sasakian structures of any deformations of a  transverse holomorphic structure.

\end{remark}

\section{$G$-structure}
Let $G$ a simply connected Lie group and $\g$ the Lie algebra of $G$.
Fix a  basis $e_{1},\dots, e_{n}$ of $\g$.
A $G$-{\em structure} on a compact manifold $M$ is a global frame $\omega_{1}, \dots ,\omega_{n}$ of the cotangent bundle $TM^{\ast}$  such that
\[d\omega_{k}+\sum_{k}C^{k}_{ij}\omega_{i}\wedge \omega_{j}=0
\]
for  structure constants $\{C^{k}_{ij}\}$ of $\g$ associated with a basis $e_{1},\dots, e_{n}$.
It is known that  a compact manifold $M$ is diffeomorphic to $\Gamma\backslash G$ for some lattice $\Gamma\subset G$.
More precisely, defining $\omega =\sum \omega_{i}\otimes e_{i}\in A^{1}(M)\otimes \g$, for the universal covering $\widetilde{M}$ of $M$  at a  base point $x$,  we have a canonical diffeomorphism 
 $f:\widetilde{M}\to G$ satisfying $f^{\ast}\omega_{G}=\omega $  where $\omega_{G}$ is the Maurer-Cartan form on $G$ and 
we have the monodromy map $\rho:\pi_{1}(M,x) \to G$ such that $f$ is $\rho$-equivariant by taking the maximal integral manifold  through $(x,e)$ of   the distribution on $\widetilde{M}\times  G$ defined by $\omega-\omega_{G}$ see \cite[Section 3]{Sh}.
 For a smooth family $(\omega_{1}^t, \dots ,\omega_{n}^t)$ of $G$-structures, we obtain a smooth family $(\rho_{t})$ such that $\rho_{t}\in  {\mathcal R}(\pi_{1}(M,x), G)$ for any parameter $t$.

If we have two $G$-structures $\omega_{1}, \dots ,\omega_{n}$ and $\omega^{\prime}_{1}, \dots ,\omega^{\prime}_{n}$ which correspond  to $(f, \rho)$ and $(f^{\prime}, \rho^{\prime})$ respectively  and a diffeomorphism $\varphi$ on $M$ such that $\varphi^{\ast}\omega_{i}^{\prime}= \omega_{i} $, then by Lie's second Theorem, $\varphi$ lifts to a left-transition on $G$.
Thus, 
 the set of isomorphism class of $G$-structures on $M$ can be seen as  $ {\mathcal R}(\pi_{1}(M,x), G)/G$ where $ {\mathcal R}(\pi_{1}(M,x), G)$ is a subset in  ${\rm Hom}(\pi_{1}(M,x), G)$ consisting of   $\rho\in {\rm Hom}(\pi_{1}(M,x), G)$ such that $\rho$ is faithful,  has a cocompact discrete image and $G$ acs on ${\rm Hom}(\pi_{1}(M,x), G)$ by conjugations.


\section{Non-abelian Hodge correspondence}
\subsection{Basic vector bundles}
We define basic  structures and basic holomorphic structures  on smooth vector bundles over Sasakian manifolds $(M, T_{M}^{1,0}, \eta)$ related to ${\mathcal F}_{\xi}$ in terms of Rawnsley's partial flat connections in \cite{Ra}.

A structure of {\em basic vector bundle} on a $C^\infty$ vector bundle $E$ is a 
linear
operator $\nabla_{\xi}\,:\, {\mathcal C}^{\infty}(E)
\,\longrightarrow\,
{\mathcal C}^{\infty}(E)$ such that
\[ \nabla_{\xi}(f s)\,=\,f\nabla_{\xi} s+ \xi(f) s.
\]
A differential form $\omega\,\in\, A^{\ast}(M,\,E)$ with 
values in $E$ is called {\em basic} if 
the equations
\[
i_{\xi}\omega=0=\nabla_{\xi} \omega
\]
hold where we extend $\nabla_{\xi}: A^{\ast}(M,\,E)\to A^{\ast}(M,\,E)$.
Let $A^{\ast}_{B}(M,\, E)\,\subset\,A^{\ast}(M,\,E)$ denote the 
subspace of basic forms in the space $A^{\ast}(M,\,E)$ of differential forms with values in 
$E$.
A Hermitian metric on $E$ is called {\em basic} if it is $\nabla_{\xi}$-invariant.
Unlike the usual Hermitian metric, a basic vector bundle may not admits a basic Hermitian metric in general.
For a connection operator $\nabla : A^{\ast}(M,\,E)\to A^{\ast+1}(M,\,E)$ such that the covariant derivative of $\nabla$ along $\xi$ is $\nabla_{\xi}$, the curvature $R^{\nabla}\in A^{2}(M,\, {\rm End} (E))$ is basic i.e. $R^{\nabla}\in A^{2}_{B}(M,\, {\rm End} (E))$.
For the Chern forms $c_{i}(E,\nabla)\in A^{2i}(M)$ associated with $\nabla$, we have $c_{i}(E,\nabla)\in A_{B}^{2i}(M)$ and we define the {\em basic Chern classes} $c_{i,B}(E)\in H^{2i}_{B}(M)$ by the cohomology classes of $c_{i}(E,\nabla)$.

A structure of {\em basic holomorphic bundle} on a $C^\infty$ vector bundle $E$ is a
partial flat connection $\nabla^{\prime\prime}$ along $ \langle \xi\rangle \oplus T^{0,1}$.
A basic holomorphic vector bundle $E$ has a canonical basic bundle structure corresponding to the derivative $\nabla^{\prime\prime}_{\xi}$.
$\nabla^{\prime\prime}$ defines the linear operator $\bar\partial_{E}: A^{p,q}_{B}(M,\, E)\to 
A^{p,q+1}_{B}(M,\, E)$ so that 
$\bar \partial_{E}( f\omega )=\bar\partial _{B}f \wedge \omega +f \bar\partial_{E}( \omega )$ for $f\in A^{0}_{B}(M), \omega\in A^{p,q}_{B}(M,\, E)$ and $\bar\partial_{E}\bar\partial_{E}=0$.
Denote by $H^{p,q}_{B}(M, E)$ the $q$-th cohomology of the complex  $(A^{p,\ast}_{B}(M,\, E), \bar\partial_{E})$.
If a basic holomorphic vector bundle $E$ admits a basic Hermitian metric $h$, as complex case, we have a unique unitary connection $\nabla^{h}$ such that for any $X\in \langle \xi\rangle \oplus T^{0,1}$, $\nabla^{h}_{X}=\nabla^{\prime\prime}_{X}$.

\begin{example}\label{line}
Consider the ${\mathcal C}^{\infty}$-trivial complex line bundle $E\,=\,M\times \C\, 
\longrightarrow\, M$ trivialized by a global frame $e$. For any $C\in\R$, we define the connection 
$
\nabla^{C}
$
on $E$ so that the connection form associated with the frame $e$ is $-2\pi\sqrt{-1}C\eta$.
Since $d\eta\,\in\, 
A^{1,1}_{B}(M)$, this $\nabla^{C}$ induces a structure of a basic holomorphic bundle.
We obtain a non-trivial basic holomorphic vector bundle structure $E_{C}$ on $E$ 
such that $c_{1,B}(E_{C})=C[d\eta]$.  
The  Hermitian metric $h$ on $E$ defined by $h(e,e)=1$  is basic on $E_{C}$. 
The 
connection $\nabla^{C}$ is unitary for $h$, and hence $\nabla^{C}=\nabla^h$. 

Define another basic Hermitian metric $\tilde{h}=Hh$ for positive basic function $H\in A^{0}_{B}(M)$.
Consider the unitary connection   $\nabla^{\tilde{h}}$ on $E_{C}$ associated with  the basic Hermitian metric $\tilde{h}$.
As usual  Holomorphic  bundles on complex manifolds, we have  $\nabla^{\tilde{h}}- \nabla^{C}=\partial_{B}{\rm log}(H)$.
Hence the  connection form of $\nabla^{\tilde{h}}$ associated with the unitary  frame $\frac{e}{H^{\frac{1}{2}}}$ is $-2\pi\sqrt{-1}C\eta- \frac{1}{2}\sqrt{-1}d_{B}^{c}{\rm log}(H)$.
\end{example}

\begin{example}

For a strongly pseudo-convex CR-manifold $(M, \,T^{1,0}_{M} ,\, \eta)$
there exists a unique affine connection $\nabla^{TW}$ on $TM$ such that the following
conditions hold (\cite{Tan, Web}):
\begin{enumerate}
\item $S$ is parallel with respect to $\nabla^{TW}$,

\item $\nabla^{TW}I\,=\,\nabla^{TW}d\eta\,=\,\nabla^{TW}\eta\,=\,\nabla^{TW}\xi\,=\,0$, and

\item the torsion $T^{TW}$ of the affine connection $\nabla^{TW}$ satisfies the equation
\[T^{TW} (X,\,Y)\,=\, -d\eta (X,\,Y)\xi
\]
for all $X,\,Y\,\in\, S_{x}$ and $x\,\in\, M$.
\end{enumerate}
This affine connection $\nabla^{TW}$ is called the {\em Tanaka--Webster connection}. 
It is known that $(T^{1,0},\,\eta)$ is a Sasakian manifold if and only if 
$T^{TW}(\xi,\,v)\,=\,0$ for all $v\, \in\, TM$. 

For a Sasakian manifold $(M, \,T^{1,0}_{M} ,\, \eta)$,
the Tanaka-Webster connection $\nabla^{TW} $ defines a 
structure of a basic holomorphic bundle on $T^{1,0}$ such that $\nabla^{TW} $ on $T^{1,0}$ is the unitary connection associated with the transverse K\"aher structure $d\eta$.

\end{example}

\subsection{Basic bundles over quasi-regular Sasakian manifolds}
Assume that a Sasakian structure $(T^{1,0}_{M},\,\eta)$ is quasi-regular that is the Reeb vector field $\xi$ generates a locally free $S^{1}$-action on $M$.
We take a $S^{1}$-action effective.
A $S^1$-equivariant  vector bundle $E$ over $M$ is  a basic  vector bundle and  corresponding to a vector orbibundle over the orbifold $X=M/S^{1}$ (see \cite{BK2}).
Assume $E$ is holomorphic  as a basic    vector bundle.
We have the basic Dolbeault complex $(A^{0,\ast}_{B}(M,E),\bar\partial_{E})$.
We notice that the sheaf on $X=M/S^{1}$ defined by  basic smooth functions on $S^1$-invariant open sets is fine 
 (see \cite{Pfl}).
Hence, by the same way as the  usual Dolbeault theorem, we have an isomorphism
\[H^{0,\ast}_{B}(M, E) \cong H^{\ast}(X,E).
\]

\subsection{Higgs bundles over Sasakian manifolds}
Let $(M,T^{1,0}_{M}, \eta)$ be a compact Sasakian manifold.
A \textit{basic Higgs bundle} over $M$ is a pair
$(E, \,\theta)$ consisting of a basic holomorphic vector bundle $E$
and 
$\theta\,\in\, A^{1,0}_{B}(M,\,{\rm 
End}(E))$ satisfying the following two conditions:
$$\overline\partial_{{\rm 
End}(E)}\theta \,=\,0\ \ \text{ and }
\ \ \theta\wedge \theta\,=\,0\, .$$

We define the {\it degree} of a basic holomorphic vector bundle $E$ by 
$${\rm deg}(E)\,:=\,\int_{M}c_{1, B}(E)\wedge (d\eta)^{n-1}\wedge\eta\, .$$
Denote by ${\mathcal O}_{B}$ the sheaf of holomorphic functions on $M$, and for a holomorphic vector bundle $E$ on $M$, denote by 
${\mathcal O}_{B}(E)$ the sheaf of holomorphic sections of $E$. 
Consider ${\mathcal 
O}_{B}(E)$ as a coherent ${\mathcal O}_{B}$-sheaf.

For a basic Higgs 
bundle $(E,\, \theta)$, a {\em sub-Higgs sheaf} of $(E,\, \theta)$ is a coherent 
${\mathcal O}_{B}$-subsheaf $\mathcal V$ of ${\mathcal O}_{B}(E)$ such that $\theta ({\mathcal V})\, \subset\, {\mathcal 
V}\otimes \Omega_{B}$, where $\Omega_{B}$ is the sheaf 
of basic holomorphic $1$-forms on $M$. By \cite[Proposition 3.21]{BH}, if ${\rm rk} 
(\mathcal V)\,<\,{\rm rk}(E)$ and ${\mathcal O}_{B}(E)/\mathcal V$ is 
torsion-free, then there is a transversely analytic sub-variety $S\,\subset\, M$ of complex 
co-dimension at least 2 such that ${\mathcal V}\big\vert_{M\setminus S}$ is given by a basic holomorphic sub-bundle $V\,\subset\,
E\big\vert_{M\setminus S}$. The degree ${\rm deg}(\mathcal V)$ can be defined by integrating $c_{1, B}(V)\wedge (d\eta)^{n-1}\wedge\eta$
on this complement $M\setminus S$.

\begin{defi}
We say that a basic Higgs bundle $(E,\, \theta)$ is {\em stable} if $E$ admits a basic Hermitian metric and for every 
sub-Higgs sheaf ${\mathcal V}$ of $(E,\, \theta)$ such that ${\rm rk} (\mathcal V)\,<\,{\rm 
rk}(E)$ and ${\mathcal O}_{B}(E)/\mathcal V$ is torsion-free, 
the inequality
\[\frac{{\rm deg}(\mathcal V)}{{\rm rk} (\mathcal V)}\,<\,\frac{{\rm deg}(E)}{{\rm rk}(E)}
\]
holds.

\end{defi}

Let $(E, \,\theta)$ be a basic Higgs bundle over a compact Sasakian manifold $M$. 
Assume that $E$ admits a basic Hermitian metric $h$.
Define $\overline\theta_{h}\,\in\, A^{0,1}_{B}(M,\,{\rm End}(E))$ by
$
h(\theta (e_{1}),\, e_{2})\,=\,h(e_{1}, \,\overline\theta_{h} (e_{2}))
$
for all $e_{1},\,e_{2}\,\in\, E_x$ and all $x\, \in\, M$.
Define the canonical connection
$
D^{h}\,=\,\nabla^{h}+\theta+\overline\theta_{h}
$
on $E$.

\begin{thm}[\cite{BK, BK2}]\label{harmonicmetric}
If a basic Higgs bundle $(E, \,\theta)$ is stable and satisfies
$$c_{1,B}(E)\,=\,0\qquad {\rm and} \qquad \int_{M} c_{2,B}(E)\wedge(d\eta)^{n-2}\wedge \eta\,=\,0,$$
then there exists a basic Hermitian metric $h$ so that the canonical connection $D^{h}$ is flat 
and such Hermitian metric is unique up to a positive constant.
Moreover, such metric $h$ is a Harmonic metric on the flat bundle $(E,D^{h})$ with respect to the Sasakian metric $g_{\eta}$ and hence the flat bundle $(E,D^{h})$ is semi-simple (\cite{Cor}).

\end{thm}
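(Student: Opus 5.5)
The plan is to transplant Simpson's heat-flow proof of the Hitchin--Kobayashi correspondence for Higgs bundles to the basic setting. Transverse Hodge theory supplies the basic analytic tools. Fix an initial basic Hermitian metric $h_{0}$ on $E$, which exists by the stability hypothesis. I would run the Donaldson--Simpson flow
\[
h_{t}^{-1}\frac{\partial h_{t}}{\partial t}=-2\sqrt{-1}\,\Lambda F^{\perp}_{D^{h_{t}}},
\]
where $F_{D^{h}}$ is the curvature of $D^{h}$ and $F^{\perp}$ its trace-free part. The first point is that the flow stays within basic metrics. All data ($\nabla^{\prime\prime}$, $\theta$, $h_{0}$, $d\eta$) are $\nabla_{\xi}$-invariant, so the right-hand side is a basic endomorphism. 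The equation is then a parabolic equation for basic sections, whose principal part is the basic Laplacian $\Delta_{B}$. Since $\Delta_{B}f=\Delta f$ on basic functions, the usual parabolic theory on the compact manifold $M$ applies. Its solution is unique, and by uniqueness it coincides with the basic solution obtained by working transversally, which shows the flow preserves basic metrics.

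Short-time existence, long-time existence and the $C^{\infty}$ estimates given a $C^{0}$ bound then go through as in Simpson. For these I would use the basic Kähler identities $[\Lambda,\overline\partial_{B}]=\sqrt{-1}\partial^{\ast}_{B}$ to compute $\Delta_{B}\,{\rm tr}\,h_{0}^{-1}h_{t}$ and to get the monotonicity of the Donaldson functional built from basic Chern--Simons forms integrated against $(d\eta)^{n-1}\wedge\eta$.

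The main obstacle is the $C^{0}$ bound, which is where stability enters. I would follow Uhlenbeck--Yau and Simpson. If $\sup|\log h_{0}^{-1}h_{t}|$ is unbounded, normalize $s_{t}=\log h_{0}^{-1}h_{t}$ and extract a weak limit $u_{\infty}\in L^{2}_{1}$ of basic endomorphisms. Its spectral projections give weakly holomorphic, $\theta$-invariant basic projections $\pi$ with $(1-\pi)\overline\partial\pi=0$. The delicate point is the regularity statement that such a $\pi$ defines a coherent basic sub-Higgs sheaf $\mathcal V$. This must be done transversally: work in foliation charts, where basic objects are objects on a local transversal, apply the Uhlenbeck--Yau regularity theorem there, and glue using invariance along $\xi$. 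Combined with \cite[Proposition 3.21]{BH}, this identifies $\mathcal V$ with a basic subbundle off a codimension-$2$ transversely analytic set. The Chern--Weil formula for ${\rm deg}(\mathcal V)$ then gives ${\rm deg}(\mathcal V)/{\rm rk}(\mathcal V)\ge {\rm deg}(E)/{\rm rk}(E)$, contradicting stability. Hence $h_{t}$ converges to a basic metric $h$ with $\Lambda F^{\perp}_{D^{h}}=0$.

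Flatness then follows from the vanishing Chern classes. Since $c_{1,B}(E)=0$, the trace part of $F_{D^{h}}$ is, after a basic conformal change of $h$, harmonic and exact, hence zero, so $F_{D^{h}}=F^{\perp}_{D^{h}}$ is basic and primitive. The basic Riemann bilinear relations (from the basic Hodge star and the Lefschetz-type decomposition) give
\[
\int_{M}\big(2c_{2}-\tfrac{r-1}{r}c_{1}^{2}\big)(E,D^{h})\wedge(d\eta)^{n-2}\wedge\eta \;=\; C\,\|F^{\perp}_{D^{h}}\|^{2}_{L^{2}}
\]
for a positive constant $C$, valid once $\Lambda F^{\perp}=0$. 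The hypotheses make the left side zero, so $F_{D^{h}}=0$.

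Uniqueness follows by comparing two flat solutions $h,h^{\prime}$. The endomorphism $k=h^{-1}h^{\prime}$ satisfies $\Delta_{B}\,{\rm tr}\,k\le 0$, so ${\rm tr}\,k$ is constant by the maximum principle. Then $k$ is parallel, its eigenspaces are $\theta$-invariant holomorphic subbundles of the same slope, and stability forces $k$ to be a scalar.

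Finally, for harmonicity: flatness of $D^{h}$ splits into the Higgs equations and the condition that the self-adjoint part $\theta+\overline\theta_{h}$ satisfies $\delta_{B}(\theta+\overline\theta_{h})=0$. This is a consequence of the Kähler identities applied to $\overline\partial_{E}\theta=0$. These forms are basic $1$-forms, and $\delta=\delta_{B}$ on basic $1$-forms. Hence $h$ is harmonic for $(E,D^{h})$ with respect to $g_{\eta}$, and Corlette's theorem gives semisimplicity.
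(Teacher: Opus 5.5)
Your proposal is correct and follows essentially the route of the work the paper cites for this theorem (\cite{BK}); the paper itself gives no proof. That route runs Simpson's heat flow on basic Hermitian metrics (using $\Delta_B=\Delta$ on basic functions), gets the $C^0$ estimate from a transverse Uhlenbeck--Yau regularity argument combined with \cite[Proposition 3.21]{BH}, derives flatness from the basic Bogomolov--L\"ubke identity, and gets harmonicity from $\delta=\delta_B$ on basic $1$-forms together with the basic K\"ahler identities. The one step you phrase loosely (and circularly) is that the flow preserves basic metrics: do it by extending the flow to a parabolic equation on $M$ that is invariant under the Reeb flow (lifted to $E$ by $\nabla_\xi$-parallel transport), so that uniqueness of the solution forces $\nabla_\xi h_t=0$.
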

This hermitian metric is said to be a {\em harmonic metric} on $(E,\theta)$. 

\section{$\widetilde{SL_{2}(\R)}$-Sasakian structure}

Let $\widetilde{SL_{2}(\R)}$ be the simply connected Lie group whose Lie algebra is $\frak{sl}_{2}(\R)$.
$\widetilde{SL_{2}(\R)}$ is the universal covering group of $SL_{2}(\R)$.
Fix a basis \[
e_{1}\,=\, \left(
\begin{array}{cc}
1 & 0 \\
0 & -1
\end{array}
\right),\qquad e_{2}\,=\, \left(
\begin{array}{cc}
0 & 1 \\
1 & 0
\end{array}
\right)\qquad {\rm and} \qquad e_{3}\,=\, \left(
\begin{array}{cc}
0 & -1 \\
1 & 0
\end{array}
\right)
\] of  $\frak{sl}_{2}(\R)$.
We have $[e_{1},\,e_{2}]\,=\,-2e_{3}$, $[e_{1},\,e_{3}]\,=\,-2e_{2}$ and  $[e_{2},\, e_{3}] \,=\,2e_{1}$.

Let $M$ be a compact $3$-dimensional manifold.
A $\widetilde{SL_{2}(\R)}$-Sasakian structure on $M$ is a pair $(\eta, w)$ such that:
\begin{enumerate}
\item $\eta\in A^{1}(M)$, $w\in A^{1}(M)_{\C}$ and $\eta, w,\overline{w}$ is a global frame of $TM_{\C}^{\ast}$:
\item  The equations 
\[\sqrt{-1}d\eta +w\wedge \bar{w}= 0 \qquad {\rm and}\qquad 
dw +2\sqrt{-1}\eta\wedge w=0
\]
hold.

\end{enumerate}

We can easily check that  $(\eta, w)$ gives a $\widetilde{SL_{2}(\R)}$-structure $( {\rm Re}w, {\rm Im}w,\xi)$ and hence 
$M\cong \Gamma\backslash \widetilde{SL_{2}(\R)}$.
We  can also check that for  the  dual frame  $(\xi, W, \overline{W})$ of  $\eta, w,\overline{w}$, $(\eta, T^{1,0}=\langle W \rangle )$ is  a left-invariant  Sasakian structure on $M\cong \Gamma\backslash \widetilde{SL_{2}(\R)}$ and $\xi$ is the Reeb vector field of the contact form $\eta$.
 
We have $E_{-\frac{1}{\pi}}\cong T^{1,0}$.
Define the  Higgs bundle $(E, \theta)$ such that $E=E_{-\frac{1}{2\pi}} \oplus E_{\frac{1}{2\pi}}$ and 
$\theta=\left(
\begin{array}{cc}
0 & 1 \\
0  & 0
\end{array}
\right)w$. Then by the equations 
\[\sqrt{-1}d\eta +w\wedge \bar{w}= 0 \qquad {\rm and}\qquad 
dw +2\sqrt{-1}\eta\wedge w=0,
\]
we can say that the standard Hermitian metric on the ${\mathcal C}^{\infty}$-trivial vector bundle $E=E_{-\frac{1}{2\pi}} \oplus E_{\frac{1}{2\pi}}$ is a harmonic metric.
The corresponding flat bundle is $d+\omega_{G}$ and hence the monodromy representation is given by $\Gamma\subset \widetilde{SL_{2}(\R)}\to SU_{1,1}(\R)$
We can say that  the  Higgs bundle $(E, \theta)$ is stable.

We notice that any non-trivial rescalling $(T^{1,0}_{M},R\eta)$ for $R\not=1$ does not come from a $\widetilde{SL_{2}(\R)}$-Sasakian structure.
We  assume that an $A_{B}^{1}$ deformation of  $(T^{1,0}_{M},\eta)$ is also comes from a  $\widetilde{SL_{2}(\R)}$-Sasakian structure $(\eta^{\prime}, w^{\prime})$.
 For  the  dual frames  $(\xi, W, \overline{W})$  and $(\xi, W^\prime, \overline{W^\prime})$ of  $\eta, w,\overline{w}$ and $\eta^{\prime}, w^{\prime},\overline{w^{\prime}}$ respectively, we have $W^\prime=W-\eta^\prime(W)\xi$.
 By $ w^{\prime}(\xi)=\overline{w^{\prime}}(W^\prime)=0$, we have $w=w^\prime$.
 This implies $dw=-2\sqrt{-1}\eta\wedge w=-2\sqrt{-1}\eta^{\prime}\wedge w$, we have $\eta=\eta^{\prime}$ by $\eta(W)=\eta^{\prime}(W)=0$.

We assume that two   $\widetilde{SL_{2}(\R)}$-Sasakian structures $(\eta, T^{1,0})$ and  $(\eta^{\prime}, w^{\prime})$ on $M$ are Sasakian-isomorphic to each other. Then there  is a  diffeomorphism $f$ on $M$ such that $f^{\ast}\eta^{\prime}=\eta$, $df(\xi)=\xi^{\prime}$ and $df (W)=k W^{\prime}$ for some non-zero complex  function $k\in C^{\infty}(M)$.
By $d\eta=f^{\ast}d\eta^{\prime}=\sqrt{-1}f^{\ast}(w\wedge \bar{w})$, we have $k$ is unitary.
By $d f^{\ast}w^\prime  =2\sqrt{-1}\eta\wedge f^{\ast}w^{\prime}$, $\xi(k)=\overline{W}(k)=0$ and hence $k$ is basic holomorphic.
Thus $k=1$ and so $f^{\ast}\eta^{\prime}=\eta$ and $f^{\ast} w=w$.

By these arguments, we can say that the set of equivalent  classes of $\widetilde{SL_{2}(\R)}$-Sasakian structures is identified with $ {\mathcal R}(\pi_{1}M, \widetilde{SL_{2}(\R)})/\widetilde{SL_{2}(\R)}$.

\begin{thm}[cf. \cite{KM}]\label{uni}
Let $(M,T^{1,0}_{M},\eta)$ be a compact $3$-dimensional Sasakian manifold with $c_{1, B}(T^{1,0})=-C[d\eta]$.
Suppose $T^{1,0}M$ is smoothly   trivial.
Then, $M$ admits a canonical $\widetilde{SL_{2}(\R)}$-Sasakian structure $(\eta^{\prime}, w)$ which is an $A_{B}^{1}$-deformation of a rescaling of  $(T^{1,0}_{M},\eta)$.
\end{thm}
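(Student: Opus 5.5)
Rescale first: replace $\eta$ by $R\eta$ so that the transverse metric has the normalization of a $\widetilde{SL_{2}(\R)}$-Sasakian structure. Since $c_{1,B}$ of the basic holomorphic line bundle $T^{1,0}$ does not change under rescaling, $c_{1,B}(T^{1,0})=-C[d\eta]=-\frac{C}{R}[d(R\eta)]$. Choosing $R=\pi C$ gives $c_{1,B}(T^{1,0})=-\frac{1}{\pi}[d\eta]$ after renaming $R\eta$ as $\eta$. This matches $E_{-\frac{1}{\pi}}\cong T^{1,0}$ from Example \ref{line}.

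Next, solve a basic Kähler--Einstein (constant curvature) equation on the transverse structure. Fix $\eta$ (rescaled) and look for a basic function $f$ such that the transverse Kähler form $d\eta+dd^{c}_{B}f$ has Ricci form equal to $-2\,(d\eta+dd^c_Bf)$ up to the normalizing constant. In dimension $3$ this is a scalar equation $\Delta_{B}f = e^{\lambda f}\cdot(\text{positive basic function}) - (\text{basic function})$ on the leaf space. It is a Kazdan--Warner type equation with negative target, so it is solvable by the standard method of sub/supersolutions or continuity using basic transverse Hodge theory; equivalently, in the quasi-regular case it is the uniformization of the orbifold $X$. Positivity of $d\eta+dd^c_Bf$ will be automatic from the equation. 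Set $\eta^{\prime}=\eta+d^{c}_{B}f$. This is an $A^{1}_{B}$-deformation, so the Reeb field and transverse holomorphic structure are unchanged, and $d\eta^{\prime}$ is a transverse Kähler form of constant negative curvature. I expect this analytic step to be the main obstacle: proving existence, uniqueness and basic-ness of the solution in the possibly irregular Sasakian setting, where one must work with the basic Laplacian and the maximum principle for basic functions.

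Then construct $w$ using smooth triviality. The Tanaka--Webster connection of $(T^{1,0}_{M\tau},\eta^{\prime})$ is a unitary connection on the basic holomorphic line bundle $T^{1,0}$, identified with $T^{1,0}_M$. Its curvature is $-\sqrt{-1}\,\mathrm{Ric}$, which by the previous step is a constant multiple of $d\eta^{\prime}$. Since $T^{1,0}$ is smoothly trivial, choose a global unitary frame $W$ (with respect to $L_{\eta^{\prime}}$). The connection form is then $a\eta^{\prime}+\beta$, where $\beta$ is closed because $d\beta=\text{curvature}-a\,d\eta^{\prime}=0$ once $a$ is chosen appropriately. A closed real form is absorbed by a gauge change $W\mapsto e^{\sqrt{-1}g}W$ up to a harmonic (flat) part. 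The remaining flat unitary twist must be removed using the freedom in choosing the trivialization, and uniqueness of the frame up to constants is what makes the structure canonical. I expect this to be a secondary difficulty, since one may need $H^1$ considerations or to allow the flat part to be absorbed into $\eta^{\prime}$ through a further closed basic $A^1_B$-deformation $\phi$, as the paper's decomposition $\tau=d^c_Bf+\phi$ suggests.

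Finally, let $w$ be the dual coframe element to $W$. Check that $\sqrt{-1}d\eta^{\prime}+w\wedge\bar w=0$, which is the unitarity of $W$ together with the normalization, and that $dw+2\sqrt{-1}\eta^{\prime}\wedge w=0$. The second identity follows from the Sasakian condition $[\xi,W]\in T^{1,0}$, which forces the connection form to be $-2\sqrt{-1}\eta^{\prime}$ exactly after the gauge fixing. This shows $(\eta^{\prime},w)$ is a $\widetilde{SL_{2}(\R)}$-Sasakian structure that is an $A^1_B$-deformation of the rescaled $(T^{1,0}_M,\eta)$. Canonicity follows from the uniqueness argument preceding the theorem.
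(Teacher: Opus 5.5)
Your outline works, provided the flat part is handled as described in the second paragraph, but it is not the route the paper takes. The paper never solves a transverse K\"ahler--Einstein equation directly. It observes that $E_{C}\otimes T^{1,0}_{M}$ has $c_{1,B}=0$, so it is unitary flat with connection form $\sqrt{-1}\phi$ for a closed $\phi\in A^{1}_{B}(M)$. It then forms the basic Higgs bundle $E_{-\frac{C}{2}}\otimes L_{\frac{\sqrt{-1}\phi}{2}}\oplus E_{\frac{C}{2}}\otimes L_{-\frac{\sqrt{-1}\phi}{2}}$ with $\theta$ the identity in $T^{1,0}_{M}\otimes T^{1,0\ast}_{M}$. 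It takes the harmonic metric from the Sasakian non-abelian Hodge correspondence of \cite{BK, BK2} and argues, via the cyclic symmetry, that this metric is ${\rm diag}(h^{-1},h)$. Finally it reads off $\eta^{\prime}=\pi C\eta-\frac{1}{2}d^{c}_{B}\log(h)+\frac{\phi}{2}$ and $w$ from the resulting flat $\mathfrak{su}(1,1)$-connection, whose flatness \emph{is} the pair of structure equations. Your three steps are the three summands of this formula taken one at a time: the rescaling by $\pi C$; the conformal change $d^{c}_{B}f$, since the Hitchin equation for this uniformizing Higgs bundle is exactly your Liouville/Kazdan--Warner equation, with $f=-\frac{1}{2}\log h$ up to a constant; and the flat correction. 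Your route is more elementary and makes the transverse uniformization explicit, but it requires a hand computation with the Tanaka--Webster connection. The paper's route outsources existence to \cite{BK, BK2}, builds the flat twist in from the start through the square root $L_{\frac{\sqrt{-1}\phi}{2}}$, and is the case $\alpha=0$ of the construction later deformed into the Hitchin section.

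The step that needs repair is the flat part, where only the second of your two options works. Write $\nabla^{TW}W=\kappa\otimes W$ for a global unitary frame, so that after your normalization $\beta:=-\sqrt{-1}\kappa-2\eta^{\prime}$ is a closed real $1$-form. A frame change $W\mapsto e^{\sqrt{-1}g}W$ with $g\colon M\to\R/2\pi\Z$ alters $\beta$ only by $dg$, whose periods lie in $2\pi\Z$. So re-trivializing cannot kill the twist in general once $b_{1}(M)>0$. For example, start from an $A^{1}_{B}$-deformation $\eta+\tau$ of a $\widetilde{SL_{2}(\R)}$-Sasakian structure $(\eta,w)$ by a closed basic $\tau$ whose periods are not all in $\pi\Z$. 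Your second step gives $f=0$, and one computes $\beta=-2\tau$, which no frame change removes. You must instead split $\beta=dg+\beta_{h}$, gauge away $dg$, and replace $\eta^{\prime}$ by $\eta^{\prime}+\frac{1}{2}\beta_{h}$; this is precisely the $\frac{\phi}{2}$ in the paper.

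For this to be an $A^{1}_{B}$-deformation you need $\beta_{h}$ to be basic, which you do not justify. It holds because $\beta_{h}$ is harmonic for $g_{\eta^{\prime}}$, hence invariant under the Killing field $\xi$, so $\beta_{h}(\xi)$ is a constant $c$. Then $c\int_{M}\eta^{\prime}\wedge d\eta^{\prime}=\int_{M}\beta_{h}\wedge d\eta^{\prime}=-\int_{M}d(\beta_{h}\wedge\eta^{\prime})=0$, so $c=0$.

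Finally, the second structure equation is not forced by $[\xi,W]\in T^{1,0}_{M}$. It comes from $dw=-\kappa\wedge w$, which holds because the Tanaka--Webster torsion takes values in $\langle\xi\rangle$ on a Sasakian manifold. After gauging away $dg$ this reads $dw=-2\sqrt{-1}(\eta^{\prime}+\frac{1}{2}\beta_{h})\wedge w$. The first equation is unaffected since $d\beta_{h}=0$, and $w$ itself does not change under an $A^{1}_{B}$-deformation.
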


\begin{proof}
By $c_{1, B}(T^{1,0}_{M})=-C[d\eta]$, 
$E_{C}\otimes T^{1,0}_{M}$ is a holomorphic basic line bundle with $c_{1, B}(E_{C}\otimes T^{1,0}_{M})=0$.
Thus, by the standard argument, $E_{C}\otimes T^{1,0}_{M}$ is unitary  flat.
Since $E_{C}$ and  $T^{1,0}_{M}$ are topologically trivial, $L_{C}\otimes T^{1,0}_{M}$ can be seen as  a line bundle $L_{\sqrt{-1}\phi}=M\times \C$ equipped  with a flat connection $d+\sqrt{-1}\phi$ for a closed $\phi\in A^{1}_{B}(M)$.
Define the Higgs bundle $(E,\theta)$ such that  
\begin{itemize}
\item the basic  holomorphic vector bundle $E\,=\,E_{-\frac{C}{2}}\otimes L_{\frac{\sqrt{-1}\phi}{2}}\oplus
E_{\frac{C}{2}}\otimes L_{\frac{ -\sqrt{-1}\phi}{2}}$  
and 
\item $\theta\,=\,\left(
\begin{array}{cc}
0&1 \\
0 & 0
\end{array}
\right)$ where $1$ is the identity  in ${\rm Hom}(L_{\frac{C}{2}}\otimes L_{\frac{ \sqrt{-1}\phi}{2}}, L_{-\frac{C}{2}}\otimes L_{-\frac{\sqrt{-1}\phi}{2}})\otimes  T^{1,0\ast}_{M}=T^{1,0}_{M}\otimes T^{1,0\ast}_{M}$ .

\end{itemize}
By the non-abelian Hodge correspondence (\cite{BK, BK2}), we have a Hermitian metric $H$ on $E$ satisfying the  Hitchin equation.
By the similar argument on  usual cyclic Higgs bundles (see \cite{Ba, Li}), we can say that $H=\left( \begin{array}{cc}
h^{-1} & 0\\
0  & h
\end{array}
\right)$.
Take a global orthonormal flame $w$ associated with $h$ corresponding to natural global flames of $L_{\frac{C}{2} }$ and  $L_{-\frac{C}{2}}$.
Then we have the flatness of the connection 
\[d+\left(
\begin{array}{cc}
\sqrt{-1} (\pi C \eta-\frac{1}{2}d^c_{B}\log (h)+ \frac{\phi}{2}) & w \\
\overline{w} & -\sqrt{-1} (\pi C \eta-\frac{1}{2}d_{B}^c\log (h)+ \frac{\phi}{2})
\end{array}
\right).
\]
Denote $\eta^{\prime}=\pi C \eta-\frac{1}{2}d^c_{B}\log (h)+ \frac{\phi}{2}$.
We obtain a $\widetilde{SL_{2}(\R)}$-Sasakian structure $(\eta^{\prime}, w)$.

\end{proof}

\begin{cor}
Let $(M,T^{1,0}_{M},\eta)$ be a compact $3$-dimensional Sasakian manifold with $c_{1, B}(T^{1,0})=-C[d\eta]$.
Then there exists finite covering $M^{\prime} \to M$ such that  $M^{\prime}$ admits a canonical $\widetilde{SL_{2}(\R)}$-Sasakian structure $(\eta^{\prime}, w^{\prime})$ which is an $A_{B}^{1}$-deformation of a rescaling of  the lifting of the Sasakian structure $(T^{1,0}_{M},\eta)$.
\end{cor}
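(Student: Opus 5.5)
The plan is to reduce to Theorem \ref{uni} by passing to a finite covering on which $T^{1,0}$ becomes smoothly trivial. The one topological input needed is that $c_{1}(T^{1,0}_{M})\in H^{2}(M,\Z)$ is a torsion class. The basic first Chern class $c_{1,B}(T^{1,0})$ maps to $c_{1}(T^{1,0}_{M})\otimes\R$ under the natural map $H^{2}_{B}(M)\to H^{2}(M,\R)$. By hypothesis $c_{1,B}(T^{1,0})=-C[d\eta]$, and $[d\eta]$ maps to zero in $H^{2}(M,\R)$ because $d\eta$ is exact on $M$. So the real first Chern class vanishes, and $c_{1}(T^{1,0}_{M})$ lies in the torsion subgroup of $H^{2}(M,\Z)$.

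Since $M$ is a closed 3-manifold, universal coefficients give $\mathrm{Tors}\,H^{2}(M,\Z)\cong \mathrm{Tors}\,H_{1}(M,\Z)$. Every torsion line bundle $L$ on $M$ admits a flat unitary connection, so its class comes from a homomorphism $\chi:\pi_{1}(M,x)\to U(1)$ with finite image. Apply this to $L=T^{1,0}_{M}$ and let $M^{\prime}\to M$ be the finite covering corresponding to $\ker\chi$. The pullback of $L$ to $M^{\prime}$ is then flat with trivial holonomy, hence smoothly trivial.

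On $M^{\prime}$, lift the Sasakian structure: take the pullback $\eta_{M^{\prime}}$ of $\eta$ and the pullback $T^{1,0}_{M^{\prime}}$ of $T^{1,0}_{M}$. The covering map is a local diffeomorphism commuting with the Reeb flow, so the lifted structure is again Sasakian, and basic forms on $M$ pull back to basic forms on $M^{\prime}$. Pulling back a Tanaka--Webster representative of $c_{1,B}$ then gives $c_{1,B}(T^{1,0}_{M^{\prime}})=-C[d\eta_{M^{\prime}}]$ in $H^{2}_{B}(M^{\prime})$. By the previous paragraph $T^{1,0}_{M^{\prime}}$ is smoothly trivial, and $M^{\prime}$ is compact because the covering is finite. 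Applying Theorem \ref{uni} to $(M^{\prime},T^{1,0}_{M^{\prime}},\eta_{M^{\prime}})$ yields the canonical $\widetilde{SL_{2}(\R)}$-Sasakian structure $(\eta^{\prime},w^{\prime})$ as an $A^{1}_{B}$-deformation of a rescaling of the lifted structure.

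The main obstacle is the torsion argument. One must check that the map $H^{2}_{B}(M)\to H^{2}(M,\R)$ really sends $c_{1,B}$ to the ordinary real Chern class. This holds because the Tanaka--Webster connection is a genuine connection on $T^{1,0}_{M}$, so its curvature form represents both classes simultaneously. The other step to verify is that a torsion line bundle admits a flat structure with finite holonomy, which follows from $H^{1}(M,U(1))\to \mathrm{Tors}\,H^{2}(M,\Z)$ being surjective via the Bockstein sequence.
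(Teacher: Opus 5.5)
Your argument is essentially the paper's. The paper's one-line proof also passes to a finite cover on which the flat line bundle $E_{C}\otimes T^{1,0}_{M}$ (topologically just $T^{1,0}_{M}$, since $E_{C}$ is trivial) loses its torsion part, and then reruns the proof of Theorem \ref{uni}. Your version differs only in making explicit why $c_{1}(T^{1,0}_{M})$ is torsion and why the hypotheses survive pullback.

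One step needs an extra line. Surjectivity of the Bockstein map $H^{1}(M,U(1))\to \mathrm{Tors}\,H^{2}(M,\Z)$ gives a flat structure with holonomy $\chi$, but it does not give finite holonomy: on a free $\Z$-summand of $H_{1}(M,\Z)$ the character $\chi$ can be an irrational rotation. Finite image is exactly what makes $\ker\chi$ finite index.

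To fix this, let $m$ be the order of $c_{1}(T^{1,0}_{M})$. Then $\chi^{m}$ lies in the kernel of the Bockstein map, so $\chi^{m}=e^{2\pi\sqrt{-1}\psi}$ for some $\psi\in\mathrm{Hom}(\pi_{1}(M,x),\R)$. Replace $\chi$ by $\chi\, e^{-2\pi\sqrt{-1}\psi/m}$. This character has the same Bockstein image, and it takes values in the $m$-th roots of unity. With this adjustment the rest of your argument goes through as written.
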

\begin{proof}
In the proof of Theorem \ref{uni}, we should take  $p: M^{\prime} \to M$ so that  the flat line bundle  $p^{\ast}(L_{C}\otimes T^{1,0})$ does not admits a torsion part.
\end{proof}

\begin{cor}\label{SLsas}
Let $M$ be a compact $3$-dimensional manifold.
Denote ${\mathcal S}_{(-,0)}(M)$ by  the set  of Sasakian structures $(\eta, \Phi_{\xi})$ on $M$ such that $c_{1, B}(T^{1,0})=-C[d\eta]$ for some positive $C\in \R$  and $c_{1}(T^{1,0}_{M})=0$ i.e. $T^{1,0}_{M}$  is topologically  trivial.
Define  the  equivalent relation $\sim$ on  ${\mathcal S}_{(-,0)}(M)$ by  almost-isomorphism (equivalently Sasakian-isomorphisms and rescallings) and  $A^{1}_{B}$-deformations.
Then 
 the quotient set  ${\mathcal S}_{(-,0)}(M)/\sim$  is identified with the set   $ {\mathcal R}(\pi_{1}M, \widetilde{SL_{2}(\R)})/\widetilde{SL_{2}(\R)}$ by taking al $\widetilde{SL_{2}(\R)}$-Sasakian structures as representatives of equivalent classes in  ${\mathcal S}_{(-,0)}(M)/\sim$.
 
\end{cor}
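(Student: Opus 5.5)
We have to construct two mutually inverse maps. One goes from ${\mathcal S}_{(-,0)}(M)/\sim$ to the set of equivalence classes of $\widetilde{SL_{2}(\R)}$-Sasakian structures. The other sends an $\widetilde{SL_{2}(\R)}$-Sasakian structure to its class in ${\mathcal S}_{(-,0)}(M)/\sim$. We then compose with the identification of equivalence classes of $\widetilde{SL_{2}(\R)}$-Sasakian structures with ${\mathcal R}(\pi_{1}M, \widetilde{SL_{2}(\R)})/\widetilde{SL_{2}(\R)}$, which was established just before Theorem \ref{uni} via the $G$-structure formalism of Section 3.

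First, every $\widetilde{SL_{2}(\R)}$-Sasakian structure $(\eta,w)$ lies in ${\mathcal S}_{(-,0)}(M)$. The frame $W$ trivializes $T^{1,0}_{M}$ smoothly, so $c_{1}(T^{1,0}_{M})=0$. The structure equations give $E_{-\frac{1}{\pi}}\cong T^{1,0}$, so $c_{1,B}(T^{1,0})=-\frac{1}{\pi}[d\eta]$, which is of the required form. Hence the map from $\widetilde{SL_{2}(\R)}$-structures to ${\mathcal S}_{(-,0)}(M)/\sim$ is well defined.

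Surjectivity onto ${\mathcal S}_{(-,0)}(M)/\sim$ follows from Theorem \ref{uni}. Each $(T^{1,0}_{M},\eta)\in{\mathcal S}_{(-,0)}(M)$ has an $A^{1}_{B}$-deformation of a rescaling which is an $\widetilde{SL_{2}(\R)}$-Sasakian structure. A rescaling followed by an $A^{1}_{B}$-deformation is again equivalent to the original, since rescaling is an almost-isomorphism and an $A^{1}_{B}$-deformation of $R\eta$ corresponds to one of $\eta$ after rescaling back. One must check here that $\sim$, generated by these two operations, is an equivalence relation compatible with composing them; I would verify that $A^{1}_{B}$-deformations commute with rescalings and pull back along Sasakian isomorphisms, so every chain reduces to a single $A^{1}_{B}$-deformation followed by an almost-isomorphism.

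Injectivity is the main point: two $\widetilde{SL_{2}(\R)}$-Sasakian structures that are $\sim$-equivalent must be isomorphic as $\widetilde{SL_{2}(\R)}$-structures. Using the reduction above, suppose $(\eta^{\tau},\Phi^{\tau})$ is an $A^{1}_{B}$-deformation of $(\eta,w)$ and is almost-isomorphic to $(\eta',w')$. Then $f^{\ast}$ of $(\eta',w')$ is a $\widetilde{SL_{2}(\R)}$-structure almost-isomorphic, in fact up to a rescaling $R$, to an $A^{1}_{B}$-deformation of $(\eta,w)$. The normalization $\sqrt{-1}d\eta+w\wedge\bar w=0$, $dw+2\sqrt{-1}\eta\wedge w=0$ forces $R=1$, by the remark that nontrivial rescalings do not come from $\widetilde{SL_{2}(\R)}$-structures; this uses the fixed value of the basic Chern class ratio $-\frac{1}{\pi}$. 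The Sasakian-isomorphism argument given earlier then yields $f^{\ast}\eta'=\eta^{\tau}$ and $f^{\ast}w'=w$ after adjusting by a unitary basic holomorphic, hence constant, function. Finally, the earlier computation shows that an $A^{1}_{B}$-deformation which is again $\widetilde{SL_{2}(\R)}$ has $\tau=0$.

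The delicate step is controlling the rescaling constant and the constant unitary factor $k$. A constant $k\neq1$ changes $w$ by a rotation, which is the conjugation by $\exp(te_{1})$-type elements. This is absorbed in the quotient by $\widetilde{SL_{2}(\R)}$, so it causes no problem at the level of ${\mathcal R}/\widetilde{SL_{2}(\R)}$.
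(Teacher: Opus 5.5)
Your route is the paper's: surjectivity comes from Theorem \ref{uni}, and injectivity from the rigidity remarks preceding it. Two of your steps are fine: the reduction of $\sim$-chains to a single $A^{1}_{B}$-deformation followed by an almost-isomorphism, and the basic-Chern-class argument forcing $R=1$.

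The step that fails is ``an $A^{1}_{B}$-deformation which is again $\widetilde{SL_{2}(\R)}$ has $\tau=0$''. The paper's sketch asserts this too, but only through the unjustified normalization $W'=W-\eta'(W)\xi$, so you cannot lean on it. After $R=1$, write $f^{\ast}(\eta',w')=(\eta+\tau,w'')$. Both $w''$ and $w$ annihilate $\xi$ and $T^{0,1}_{M\tau}$, so you only get $w''=k^{-1}w$ for some nowhere-zero function $k$.
\begin{itemize}
\item The second structure equation then gives $\xi(k)=0$ and $\bar\partial_{B}\log k=2\sqrt{-1}\,\tau^{0,1}$.
\item The first gives $d\tau=(|k|^{-2}-1)\,d\eta$, i.e. $\sqrt{-1}\partial_{B}\bar\partial_{B}\log|k|^{2}=2(1-|k|^{-2})\,d\eta$.
\item The maximum principle yields $|k|\equiv 1$, hence $\tau=\frac{1}{2\sqrt{-1}}k^{-1}dk$.
\end{itemize}
So $k$ is basic and unimodular but not holomorphic. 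The ``unitary basic holomorphic, hence constant'' argument uses $f^{\ast}\eta'=\eta$ exactly, which here is the conclusion you are trying to reach. Conversely, for every basic $k\colon M\to S^{1}$, the pair $(\eta+\frac{1}{2\sqrt{-1}}k^{-1}dk,\,k^{-1}w)$ satisfies both structure equations and is an $A^{1}_{B}$-deformation of $(\eta,w)$. So $\tau\neq 0$ genuinely occurs.

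This is not cosmetic. If $k=e^{\sqrt{-1}\psi}$ with $\psi$ a global basic function, the new Maurer--Cartan form is the gauge transform of the old one by $\exp(\pm\frac{\psi}{2}e_{3})$. (Phase rotations of $w$ are generated by $e_{3}$, the direction dual to $\eta$, not by the hyperbolic $e_{1}$.) The developing map is then multiplied on the right by a $\pi_{1}$-invariant map, so the holonomy class is unchanged and this case can be repaired. But suppose $k$ has nonzero winding class $n\in H^{1}(M;\Z)$; such basic $k$ exist whenever the base orbifold has positive genus, e.g. for $\Gamma\backslash PSL_{2}(\R)$. Then the lifted gauge transformation jumps by $z^{n(\gamma)}$ along $\gamma$, where $z$ generates the center of $\widetilde{SL_{2}(\R)}$. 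The holonomy becomes $\gamma\mapsto\rho(\gamma)z^{n(\gamma)}$, which is not conjugate to $\rho$, since the projection of $\rho$ to $PSL_{2}(\R)$ has trivial centralizer.

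So injectivity into $\mathcal{R}(\pi_{1}M,\widetilde{SL_{2}(\R)})/\widetilde{SL_{2}(\R)}$ cannot be obtained this way. You would have to quotient further by these central twists, or restrict $\sim$. Relatedly, since $\sim$ allows arbitrary diffeomorphisms $f$, your argument only shows that $\rho'\circ f_{\ast}$ is conjugate to $\rho$. Concluding $[\rho']=[\rho]$ requires $f_{\ast}$ to be inner, e.g. $f$ isotopic to the identity.
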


\section{Moduli spaces of flat bundles of rank $2$ on $3$-dimensional Sasakian manifolds}\label{modul}
Let $(M,T^{1,0}_{M},\eta)$ be a compact Sasakian manifold.

Let ${\mathcal M}^{s}_{flat }(SL_r)$ be the moduli space of simple flat complex vector bundles over $M$ of rank $r$ with  trivial  determinant.
Fix a simple flat bundle $(E,D)$ of rank $r$ with  trivial  determinant.
Consider the basic structure $D_{\xi}$ associated with $D$ on the smooth vector bundle $E$.
We define the moduli space  
${\mathcal M}^{s}_{Bflat }(E, D_{\xi})$ of simple flat bundles with trivial determinant and with fixed basic structure $D_{\xi}$.
Obviously, ${\mathcal M}^{s}_{Bflat }(E, D_{\xi})$ contains the isomorphism class of $(E,D)$.
In \cite{Ka}, we show that  ${\mathcal M}^{s}_{Bflat }(E, D_{\xi})$ is an open and  closed set in ${\mathcal M}^{s}_{flat }(SL_r)$.

$(E,D)$ admits harmonic metric $h$ which is  unique up to positive scaler multiplication by Corlette's Theorem \cite{Cor}.
It is known that $h$ is basic on $(E, D_{\xi})$ (see \cite{BK}).
Denote by ${\rm End}_{0}(E)$ the trace free part of $ {\rm End}(E)$ associated with $h$.
We notice that ${\mathcal M}^{s}_{flat }(SL_r)$ is a complex  analytic space whose chart constructed by the Kuranishi spaces of the DGLA $A^{\ast}(M, {\rm End}_{0}(E))$.
${\mathcal M}^{s}_{Bflat }(E, D_{\xi})$ is an analytic space whose chart constructed by the Kuranishi spaces of the DGLA $A^{\ast}_{B}(M, {\rm End}_{0}(E))$.
In \cite{Ka}, we show that the inclusion $A^{\ast}_{B}(M, {\rm End}_{0}(E))\subset A^{\ast}(M, {\rm End}_{0}(E))$ induces an open-closed embedding of ${\mathcal M}^{s}_{Bflat }(E, D_{\xi})$ into  ${\mathcal M}^{s}_{flat }(SL_r)$.

Denote by ${\rm Hom}^{s}(\pi_{1}(M,x), SL_{n}(\C))$  the set of simple representations of $\pi_{1}(M,x)$ into $SL_{n}(\C)$.
 We notice that the quotient  ${\rm Hom}^{s}(\pi_{1}(M,x), SL_{n}(\C))/ SL_{n}(\C)$ is a quasi-projective variety which is  analytically isomorphic to ${\mathcal M}^{s}_{flat }(SL_r)$.

Consider  the decomposition ${\rm End}_{0}(E)={\frak u}_{0}(E)\oplus P_{0}(E)$ such that  ${\frak u}_{0}(E)$ (resp. $P_{0}(E)$) consists of anti-self-adjoint (resp. self-adjoint) operators associated with the harmonic metric $h$.
 A simple harmonic bundle structure on $(E,h)$ is a pair $(\nabla, \phi)$ of unitary connection $\nabla$ with respect to $h$ and self-adjoint basic  $1$-form $\phi\in A^{1}_{B}(P_{0}(E))$ such that $\nabla_{\xi}=D_{\xi}$, $\nabla+\phi$ is a simple flat connection and for the operator  $\nabla^{\prime\prime}\,:\, {\mathcal C}^{\infty}(E)
\,\longrightarrow\,
{\mathcal C}^{\infty}(E\otimes ( \langle \xi\rangle \oplus T^{0,1}_{M})^{\ast})$ derived from and the $(1,0)$-part $\theta$ of $\phi$,  $\nabla$, $(\nabla^{\prime\prime}, \theta)$ is a basic Higgs bundle structure on $E$.
  Define the moduli space $\mathcal M^{s}_{Bharm}(E, D_{\xi})$ as the quotient of the space of  simple harmonic bundle structures by the  gauge group $SU_{B}(E)=SU(E)\cap GL_{B}(E)$.

Define the moduli space ${\mathcal M}^{st}_{BHiggs }(E, D_{\xi})$ of  stable basic  Higgs bundle structures on $E$ with fixed basic structure $D_{\xi}$.
By non-abelian Hodge correspondence in \cite{BK}, 
we have homeomorphisms
\[{\mathcal M}^{s}_{Bflat }(E, D_{\xi})\cong {\mathcal M}^{s}_{Bharm}(E, D_{\xi})\cong {\mathcal M}^{st}_{BHiggs }(E, D_{\xi})
\]
via the correspondences 
\[\nabla+\phi\leftarrow (\nabla, \phi)\to (\nabla^{\prime\prime}, \phi^{1,0}).
\]

We assume $\dim M=3$.
In this case, Ono (\cite{Ono}) constructs an explicit  smooth manifold structure on   ${\mathcal M}^{s}_{Bharm}(E, D_{\xi})$ derived from a real vector space $A^{1}_{B}(({\frak u}_{0}(E))\oplus A^{1}_{B}(P_{0}(E))$ such that the dimension of  ${\mathcal M}^{s}_{Bharm}(E, D_{\xi})$  is the real  dimension of the  first cohomology of the Dolebeault-Higgs complex $(A^{\ast}_{B}({\rm End}_{0}(E)), \partial_{{\rm End}_{0}(E)}+\theta)$ corresponding to  $D$ and the harmonic metric $h$.
Define the smooth map (“Hitchin fibration") \[\tau : {\mathcal M}^{s}_{Bharm  }(E, D_{\xi})\to \bigoplus ^{{\rm rank} E} _{i=2}H^{0,0}_{B}(M,  (T^{1,0\ast})^i)\] defined by   the coefficients of the characteristic polynomial of Higgs fields  associated with harmonic bundle structures $(\nabla, \phi)$.
\begin{remark}
In \cite{Ono}, Ono also  defines a canonical  hyperK\"ahler structure \[(I,J,K, \omega_{I}, \omega_{K}, \omega_{J})\]  on ${\mathcal M}^{s}_{Bharm}(E, D_{\xi})$.
One complex structure $I$ is induced  by the complex structure $A^{1}_{B}(({\frak u}_{0}(E))\oplus A^{1}_{B}(P_{0}(E))=A^{1,0}_{B}(({\frak u}_{0}(E))\oplus A^{0,1}_{B}(({\frak u}_{0}(E))\oplus A^{1,0}_{B}(P_{0}(E))\oplus A^{0,1}_{B}(P_{0}(E))$.
The map $\tau : {\mathcal M}^{s}_{Bharm  }(E, D_{\xi})\to \bigoplus ^{{\rm rank} E} _{i=2}H^{0,0}_{B}(M,  (T^{1,0\ast}_{M})^i)$ is holomorphic for this complex structure.

\end{remark}

Suppose $M$ admits a $\widetilde{SL_{2}(\R)}$-Sasakian structure $(\eta, w)$.
Then we can assume 
$M=\Gamma \backslash\widetilde{SL_{2}(\R)} $ with a left-invariant Sasakian structure $(T^{1,0}_{M}=\langle W\rangle,\eta)$.
Take $(E=M\times \C^{2}, D=d+\omega_{\widetilde{SL_{2}(\R)}})$. 
Then  the standard metric on $E=M\times \C^{2}$ is a harmonic metric and the corresponding Higgs bundle $(E,\theta)$ is given by $E=E_{-\frac{1}{2\pi}} \oplus E_{\frac{1}{2\pi}}$ and 
$\theta=\left(
\begin{array}{cc}
0 & 1 \\
0  & 0
\end{array}
\right)w$. 
Then $\tau([\nabla, \phi])=\det \phi^{1,0}$.
The map $\tau: {\mathcal M}^{s}_{Bharm  }(E, D_{\xi})\to H^{0,0}_{B}(M, (T_{M}^{1,0\ast})^2)$ is proper (see \cite[Corollary 4.4, Example 4.5]{Ka}).

We can compute  \[\dim {\mathcal M}^{s}_{Bharm  }(E, D_{\xi})=4\dim_{\C} H^{0,0}_{B}(M, (T_{M}^{1,0\ast})^2)\] by an  isomorphism of Eichler-Shimura type  (see e.g. \cite{Zu}).
Consider  the Dolebeault-Higgs complex \[(A^{\ast}({\rm End}_{0}(E)), \partial_{{\rm End}_{0}(E)}+\theta)\] as the double complex \[(A^{\ast,\ast}_{B}({\rm End}_{0}(E)), \partial_{{\rm End}_{0}(E)}, \theta).\]
Then the dimension of the  first cohomology of $(A^{\ast}({\rm End}_{0}(E)), \partial_{{\rm End}_{0}(E)}+\theta)$ is computed by the second term $E_{2}^{p,q}$ of the spectral sequence of the double complex.
We have 
\[E_{1}^{p,q}=H^{p,q}_{B}(M, T_{M}^{1,0\ast})\oplus H^{p,q}_{B}(M, \C)\oplus H^{p,q}_{B}(M, T^{1,0}_{M})
\]
for $(p,q)=(0,0), (1,0), (0,1), (1,1)$.
By the basic Kodaira-vanishing theorem (see \cite{Noz}), we have $H^{0,0}_{B}(M,  T_{M}^{1,0}) =H^{1,1}_{B}(M,  T^{1,0\ast}_{M})=0$.
The differential $d_{1}:E_{1}^{p,q}\to E_{1}^{p+1,q}$ is induced by $\theta$.
Since  $\theta$ is defined by the identification of the basic holomorphic  bundle $ T^{1,0\ast}_{M}$ with the basic holomorphic bundle generated by basic holomorphic  $1$-forms, $d_{1}$ gives  isomorphisms
\[H^{0,0}_{B}(M, \C)\cong H^{1,0}_{B}(M, T^{1,0}), \qquad H^{0,0}_{B}(M, T^{1,0\ast})\cong H^{1,0}_{B}(M, \C)
\]
\[H^{0,1}_{B}(M,  T^{1,0\ast})\cong H^{1,1}_{B}(M, \C)\qquad {\rm and}\qquad H^{0,1}_{B}(M, \C)\cong H^{1,1}_{B}(M, T^{1,0})
\]
and $d_{1}=0$ on $H^{1,0}(M, T^{1,0\ast})$ and $H^{0,1}(M, T^{1,0})$.
Thus $E_{2}^{1,0}=H^{1,0}(M, T^{1,0\ast})$ and $E_{2}^{0,1}=H^{0,1}(M, T^{1,0})$.
By the Basic Serre duality (see \cite{Noz}),  $\dim H^{1,0}(M, T^{1,0\ast})=\dim H^{0,1}(M, T^{1,0})$.

The homeomorphism ${\mathcal M}^{s}_{Bflat }(E, D_{\xi})\cong {\mathcal M}^{s}_{Bharm}(E, D_{\xi})$ via the non-abelian Hodge correspondence is a diffeomorphism in this assumption (see \cite{Cor} and  also arguments in the proof of  \cite[Proposition 2.3]{Cor2}).
We regard $SU(1,1)\subset SL_{2}(\C)$ as a real form.
Then for ${\rm Hom}^s(\pi_{1}(M,x), SU(1,1))={\rm Hom}(\pi_{1}(M,x), SU(1,1))\cap{\rm Hom}^s(\pi_{1}(M,x), SL_{2}(\C))$.
Via the  natural map 
\[{\rm Hom}^s(\pi_{1}(M,x), SU(1,1))/SU(1,1)\to {\rm Hom}^{s}(\pi_{1}(M,x), SL_{2}(\C))/ SL_{2}(\C), 
\]
the  space ${\rm Hom}^s(\pi_{1}(M,x), SU(1,1))/SU(1,1) $ 
can be  regarded as a real  submanifold in  ${\rm Hom}^{s}(\pi_{1}(M,x), SL_{2}(\C))/ SL_{2}(\C)$ of dimension $2\dim_{\C} H^{0,0}_{B}(M, (T^{1,0\ast})^2)$ (see \cite{JM}).

\section{The (lifting of) Hitchin section}
Let  $M$ be a compact $3$-dimensional manifold equipped with a $\widetilde{SL_{2}(\R)}$-Sasakian structure $(\eta, w)$.
Then we can assume 
$M=\Gamma \backslash\widetilde{SL_{2}(\R)} $ with a left-invariant Sasakian structure $(T^{1,0}_{M}=\langle W\rangle,\eta)$.
We have 
\[\sqrt{-1}d\eta +w\wedge \bar{w}= 0\qquad
{\rm and}\qquad dw +2\sqrt{-1}\eta\wedge w=0
\]
We have $E_{-\frac{1}{\pi}}\cong T^{1,0}$.
Define a stable Higgs bundle $(E, \theta_{\alpha})$ such that $E=E_{-\frac{1}{2\pi}} \oplus E_{\frac{1}{2\pi}}$ and 
$\theta=\left(
\begin{array}{cc}
0 & 1 \\
\alpha  & 0
\end{array}
\right)w$ 
satisfying
\[\xi (\alpha)-2\sqrt{-1}(\alpha)=\overline{W}(\alpha)=0.
\]
We have $\alpha w\otimes w\in H^{1,0}_{B}(M, T^{1,0\ast }_M)$.
By the non-abelian Hodge correspondence (\cite{BK, BK2}), we have a Hermitian metric $H_{\alpha}$ on $E$ satisfying the  Hitchin equation.
By the similar argument on  usual cyclic Higgs bundles (see \cite{Ba, Li}), we can say that $H_{\alpha}= \left(
\begin{array}{cc}
h^{-1}_{\alpha} & 0\\
0  & h_{\alpha}
\end{array}
\right)$ for a positive real function $h$.
Thus we have a flat connection 
\begin{multline*}
d+\left(
\begin{array}{cc}
\sqrt{-1} (\eta-\frac{1}{2}d_{B}^{c}{\rm log} (h_{\alpha}))& 0 \\
0 & -\sqrt{-1}(\eta-\frac{1}{2}d_{B}^{c}{\rm log} (h_{\alpha}))
\end{array}
\right)\\
+\left(
\begin{array}{cc}
0 & h^{-2}_{\alpha} \\
h^2_{\alpha}\alpha & 0
\end{array}
\right)w+\left(
\begin{array}{cc}
0 & h^{2}_{\alpha} \overline{\alpha}\\
h^{-2}_{\alpha} & 0
\end{array}
\right)\overline{w}.
\end{multline*}
Let $\eta_{\alpha}=\eta-\frac{1}{2}d_{B}^{c}{\rm log} (h_{\alpha})$ and  $w_{\alpha}=h^{-2}_{\alpha}w+ h^2_{\alpha}\overline{\alpha}\overline{w}$.
We have 
\[\sqrt{-1}d\eta_{\alpha} +w_{\alpha}\wedge \overline{w_{\alpha}}= 0 \qquad
{\rm and}\qquad dw_{\alpha}+2\sqrt{-1}\eta_{\alpha}\wedge w_{\alpha}=0.
\]
By 
\[\sqrt{-1}d\eta +w\wedge \overline{w}= 0,
\]
the equation
\[\sqrt{-1}d\eta_{\alpha} +w_{\alpha}\wedge \overline{w_{\alpha}}= 0 \]
 is reduced to
\[-\sqrt{-1}\partial_{B}\bar\partial _{B}{\rm log} (h_{\alpha})-\sqrt{-1}(h_{\alpha}^{-4}-h_{\alpha}^4\vert \alpha\vert^{2}-1)d\eta=0.
\]
By the basic K\"ahler identities, we  have 
\[2\Delta_{B}{\rm log} (h_{\alpha})-(h_{\alpha}^{-4}-h_{\alpha}^4\vert \alpha\vert^{2}-1)=0.
\]
Since $T^{1,0\ast}_{M}$ is non-trivial as a basic holomorphic line bundle, $\alpha$ is non-constant and  has zeros.
For the usual Laplacian  $\Delta=-\xi^{2}- W\overline{W}$ on $A^{0}(M)$, we have $\Delta_{B}=\Delta$ on $A^{0}_{B}(M)$.
By  
\[\xi (\alpha)-2\sqrt{-1}(\alpha)=\overline{W}(\alpha)=0,
\]
we have $\Delta {\rm log}\vert \alpha\vert^2=0$ and hence we have
\[(\xi^{2}+ W\overline{W}){\rm log} (h^8_{\alpha}\vert \alpha^2\vert)=-\Delta{\rm log} (h^8_{\alpha}\vert \alpha^2\vert)=4h_{\alpha}^{-4}(h_{\alpha}^8\vert \alpha\vert^{2}-1)+4
\]
outside the Zeros of $\alpha$.
By the maximum principle, 
we have \[h_{\alpha}^8\vert \alpha\vert^{2}-1\le {\rm max}(h_{\alpha}^8\vert \alpha\vert^{2})-1<0.\]
This implies that $w_{\alpha}\wedge \overline{w_{\alpha}}=(h_{\alpha}^{-4}-h_{\alpha}^4\vert \alpha\vert^{2})w\wedge \overline{w}$ is non-zero everywhere (cf. \cite[Claim 6.1]{Li})  .

Thus, we obtain  a new $\widetilde{SL_{2}(\R)}$-Sasakian structure $(\eta_{\alpha}, w_{\alpha})$ on $M$.

By the above construction, we have the smooth family $(\eta_{\alpha}, w_{\alpha})$ of  $\widetilde{SL_{2}(\R)}$-Sasakian structures such that the space of  parameters $\alpha$ is  considered as $H^{1,0}_{B}(M, T^{1,0\ast}_M)$ and $(\eta_{0}, w_{0})=(\eta, w)$.
We note that by the basic Serre duality (see \cite{Noz}), $\alpha w\otimes w \in H^{1,0}_{B}(M, T^{1,0\ast}_M) $ is corresponding to a harmonic 
form $h^{4}_{\alpha}\alpha \bar{w}\otimes W\in A^{0,1}_{B}(M, T^{1,0}_M) $ and we have an isomorphism 
\[H^{1,0}_{B}(M, T^{1,0\ast}_M) \ni \alpha w\otimes w \mapsto [h^{4}_{\alpha}\alpha \bar{w}\otimes W]\in  H^{0,1}_{B}(M, T^{1,0}_M) .\]

We note that the Sasakian structure $(T^{1,0}_M,\eta) $ is quasi-regular.
By the basic Kodaira-vanishing theorem (see \cite{Noz}), we have $H^{0,0}_{B}(M,  T^{1,0}_M) =0$ and hence  $H^{0}(X,  T^{1,0}X) =0$.
Since $X$ is a complex $1$-dimensional orbifold, $H^{2}(X,  T^{1,0}X) =0$.
Thus, $(\eta_{\alpha}, w_{\alpha})$ gives  a differential deformation family $(\xi, T^{1,0}_{M_{\alpha}})$ of transverse holomorphic structures such that the Kodaira-Spencer map $T_{0}H^{1,0}_{B}(M, T^{1,0\ast}_M) \to H^{1}(M, \Theta)\cong H^{0,1}_{B}(M, T^{1,0}_M)$ is an isomorphism.

Denote $g(X)$ by the genus of  the topological surface $X$ and $n(X)$ by the number of orbifold points of $X$.
We can  compute 
\[\dim H^{1,0}_{B}(M, T^{1,0\ast}_M) =3g(X)-3+n(X)
\]
by the Riemann-Roch formula for $V$-bundles (see \cite{Kaw} and \cite{FS}).

We have obtained the smooth map  
\begin{multline*}
\sigma : H^{1,0}_{B}(M, T^{1,0\ast}_M)\ni \alpha w\otimes w\\
\mapsto \left(d+\left(
\begin{array}{cc}
-\sqrt{-1} \eta_{\alpha}& 0 \\
0 & \sqrt{-1}\eta_{\alpha}
\end{array}
\right), \left(
\begin{array}{cc}
0 & w_{\alpha} \\
\overline{w_{\alpha}} & 0
\end{array}
\right) \right)\in {\mathcal M}^{s}_{Bharm  }(E, D_{\xi}).
\end{multline*}
We have 
\[
\sigma\circ  \tau (\alpha w\otimes w)={\rm det} \left(
\begin{array}{cc}
0 & h^{-2}_{\alpha}w \\
h^2_{\alpha}\alpha w & 0
\end{array}
\right)=\alpha w\otimes w.
\] 
Thus the $\sigma :  H^{1,0}_{B}(M, T^{1,0\ast}_M)\to {\mathcal M}^{s}_{Bharm  }(E, D_{\xi})$ is a section of  $\tau :{\mathcal M}^{s}_{Bharm  }(E, D_{\xi})\to H^{0,0}_{B}(M, (T^{1,0\ast}_M)^2)=H^{1,0}_{B}(M, T^{1,0\ast}_M)$.
This implies that $\sigma :  H^{1,0}_{B}(M, T^{1,0\ast}_M)\to {\mathcal M}^{s}_{Bharm  }(E, D_{\xi})$ is a closed  smooth embedding.

Via  diffeomorphisms 
\[{\rm Hom}^{s}(\pi_{1}(M,x), SL_{2}(\C))/ SL_{2}(\C)\cong {\mathcal M}^{s}_{flat }(SL_r)  \supset{\mathcal M}^{s}_{Bflat }(E, D_{\xi})\cong {\mathcal M}^{s}_{Bharm}(E, D_{\xi}),\]
$\sigma :  H^{1,0}_{B}(M, T^{1,0\ast}_M)\to {\mathcal M}^{s}_{Bharm  }(E, D_{\xi})$ is regarded as a map $H^{1,0}_{B}(M, T^{1,0\ast}_M)\to {\rm Hom}^s(\pi_{1}(M,x), SU(1,1))/SU(1,1)$.
By \[\dim {\rm Hom}^s(\pi_{1}(M,x), SU(1,1))/SU(1,1)=2\dim_{\C} H^{1,0}_{B}(M, T^{1,0\ast}_M),\]  $H^{1,0}_{B}(M, T^{1,0\ast}_M)$ is embbeded into \[ {\rm Hom}^s(\pi_{1}(M,x), SU(1,1))/SU(1,1)\] as a connected component.

We consider the identification ${\mathcal S}_{(-,0)}(M)/\sim={\mathcal R}(\pi_{1}(M,x), \widetilde{SL_{2}(\R)})/\widetilde{SL_{2}(\R)}$ as in Corollary \ref{SLsas}.
The map
\begin{multline*}
{\mathcal S}_{(-,0)}(M)/\sim\ni [\eta, \sqrt{-1}w\otimes W-\sqrt{-1}\overline{w}\otimes \overline{W}]\\
\mapsto \left[d+\left(
\begin{array}{cc}
-\sqrt{-1} \eta& w \\
\overline{w} & \sqrt{-1}\eta
\end{array}
\right)\right]\in {\mathcal M}^{s}_{flat }(SL_r)
\end{multline*}
is identified with the natural map
\begin{multline*}\pi: {\mathcal R}(\pi_{1}(M,x), \widetilde{SL_{2}(\R)})/\widetilde{SL_{2}(\R)}\\
\to {\rm Hom}^s(\pi_{1}(M,x), SU(1,1))/SU(1,1)\hookrightarrow  {\rm Hom}^{s}(\pi_{1}(M,x), SL_{2}(\C))/ SL_{2}(\C)
\end{multline*}
induced by the covering map $p: \widetilde{SL_{2}(\R)})\to SL_{2}(\R)\cong SU(1,1)$.
Via this map, the map
\[H^{1,0}_{B}(M, T^{1,0\ast}_M) \ni \alpha w\otimes w \mapsto [\eta_{\alpha}, \sqrt{-1}w_{\alpha}\otimes W_{\alpha}-\sqrt{-1}\overline{w_{\alpha}}\otimes \overline{W_{\alpha}}]\in {\mathcal S}_{(-,0)}(M)/\sim
\]
can be seen as a lifting of the embedding \[H^{1,0}_{B}(M, T^{1,0\ast}_M)\hookrightarrow {\rm Hom}^s(\pi_{1}M, SU(1,1))/SU(1,1).\]
Summarizing the above arguments, we obtain the following results.
\begin{thm}\label{secdef}
Let $(M,\, T^{1,0}_{M} ,\, \eta) $ be a compact  $3$-dimensional Sasakian manifold such that $(\eta, \Phi_{\xi})\in {\mathcal S}_{(-,0)}(M)$.
Then there exists a smooth family $(\eta_{\alpha}, w_{\alpha})$ of $\widetilde{SL_{2}(\R)}$-Sasakian structures parametrized by $\alpha w\otimes  w\in H^{1,0}_{B}(M, T^{1,0\ast}_M)$ such that:
\begin{itemize}
\item $[\eta_{0}, \sqrt{-1}w_{0}\otimes W_{0}-\sqrt{-1}\overline{w_{0}}\otimes \overline{W_{0}}]=[\eta, \Phi_{\xi}]\in {\mathcal S}_{(-,0)}(M)/\sim$.
\item  $(\xi,  \langle W_{\alpha}\rangle )$ is a differential  deformation family of the transverse complex structure $(\xi, T^{1,0}_{M})$ such that the Kodaira-spencer map at $0$ is an isomorphism.
\item The  map
\begin{multline*}H^{1,0}_{B}(M, T^{1,0\ast}_M)\ni \alpha w\otimes  w
\mapsto \left[d+\left(
\begin{array}{cc}
\sqrt{-1} \eta_{\alpha}& w_{\alpha} \\
\overline{w_{\alpha}} & -\sqrt{-1}\eta_{\alpha}
\end{array}
\right)\right]\in {\mathcal M}^{s}_{flat }(SL_r)
\end{multline*}
induces a diffeomorphism from the  vector space  $H^{1,0}_{B}(M, T^{1,0\ast}_M)$ onto a connected component of ${\rm Hom}^s(\pi_{1}M, SU(1,1))/SU(1,1)$.
\item Via the identification  ${\mathcal S}_{(-,0)}(M)/\sim={\mathcal R}(\pi_{1}M, \widetilde{SL_{2}(\R)})/\widetilde{SL_{2}(\R)}$ as in Corollary \ref{SLsas}, 
 the map
\begin{multline*}
H^{1,0}_{B}(M, T^{1,0\ast}_M) \ni \alpha w\otimes w \\
\mapsto [\eta_{\alpha}, \sqrt{-1}w_{\alpha}\otimes W_{\alpha}-\sqrt{-1}\overline{w_{\alpha}}\otimes \overline{W_{\alpha}}]\in {\mathcal S}_{(-,0)}(M)/\sim 
\end{multline*}
can be regarded as  a lifting  
\[\tilde\sigma: H^{1,0}_{B}(M, T^{1,0\ast}_M)\to {\mathcal R}(\pi_{1}(M,x), \widetilde{SL_{2}(\R)})/\widetilde{SL_{2}(\R)} \]
 of the injection $H^{1,0}_{B}(M, T^{1,0\ast}_M)\hookrightarrow {\rm Hom}^s(\pi_{1}(M,x), SU(1,1))/SU(1,1)$ induced  by $\sigma $ for the map 
 \[\pi: {\mathcal R}(\pi_{1}(M,x), \widetilde{SL_{2}(\R)})/\widetilde{SL_{2}(\R)}\to {\rm Hom}^s(\pi_{1}(M,x), SU(1,1))/SU(1,1)
\]
induced by the covering map $p: \widetilde{SL_{2}(\R)}\to SL_{2}(\R)\cong SU(1,1)$.
\end{itemize}

\end{thm}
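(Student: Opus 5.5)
First reduce to the model case. By Theorem \ref{uni}, a rescaling of $(T^{1,0}_M,\eta)$ has an $A^1_B$-deformation that is a canonical $\widetilde{SL_{2}(\R)}$-Sasakian structure $(\eta,w)$. Since $\sim$ identifies rescalings and $A^1_B$-deformations, we may assume $M=\Gamma\backslash\widetilde{SL_{2}(\R)}$ with its left-invariant structure $(\langle W\rangle,\eta)$. The family is then the one built in this section. For each $\alpha w\otimes w\in H^{1,0}_B(M,T^{1,0\ast}_M)$ take the cyclic Higgs bundle $(E,\theta_\alpha)$ and its harmonic metric $H_\alpha$ from Theorem \ref{harmonicmetric}, which is diagonal by the cyclic symmetry. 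This gives $\eta_\alpha=\eta-\frac12 d^c_B\log h_\alpha$ and $w_\alpha=h_\alpha^{-2}w+h_\alpha^2\bar\alpha\bar w$. Flatness of the resulting connection is exactly the pair of $\widetilde{SL_{2}(\R)}$-Sasakian structure equations. The frame condition $w_\alpha\wedge\overline{w_\alpha}\neq0$ follows from the maximum-principle bound $h_\alpha^8|\alpha|^2<1$. The family is smooth in $\alpha$ because the Hitchin equation $2\Delta_B\log h_\alpha=h_\alpha^{-4}-h_\alpha^4|\alpha|^2-1$ has linearization $2\Delta_B+4(h^{-4}+h^4|\alpha|^2)$, which is invertible on basic functions; the implicit function theorem then applies. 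Uniqueness of $h_\alpha$ gives $h_0=1$, hence $(\eta_0,w_0)=(\eta,w)$, which is the first bullet.

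For the second bullet, note that $W_\alpha$ is dual to $w_\alpha$. At first order $\dot w=-2\dot h\,w+\bar{\dot\alpha}\bar w$, so the variation of $T^{1,0}$ is represented by the $(0,1)$-form $\dot\alpha\,\bar w\otimes W$, up to $\bar\partial_B$-exact terms coming from $\dot h$ and the $d^c_B$ part. Since $h_0=1$, this is the Serre-dual image of $\dot\alpha\, w\otimes w$ stated above. Basic Kodaira vanishing gives $H^0(X,T^{1,0}X)=0$, and $H^2=0$ holds in dimension one, so $H^1(M,\Theta)\cong H^{0,1}_B(M,T^{1,0}_M)$. The Kodaira--Spencer map is therefore the Serre duality isomorphism.

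For the third bullet, view $\sigma$ as a map into ${\mathcal M}^s_{Bharm}(E,D_\xi)$. The identity $\tau\circ\sigma=\mathrm{id}$, obtained from $\det\theta_\alpha=-\alpha w\otimes w$ up to sign, shows that $\sigma$ is an injective immersion. Because $\tau$ is proper, $\sigma$ is a proper injective immersion, hence a closed embedding. The image consists of connections with values in $\mathfrak{su}(1,1)$, so it lies in ${\rm Hom}^s(\pi_1M,SU(1,1))/SU(1,1)$. This space has dimension $2\dim_\C H^{1,0}_B$ by the Eichler--Shimura count and \cite{JM}. An embedding between manifolds of equal dimension is open, and the image is closed, so it is a connected component. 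Transport along the diffeomorphisms ${\mathcal M}^s_{Bharm}\cong{\mathcal M}^s_{Bflat}\subset{\mathcal M}^s_{flat}$, where ${\mathcal M}^s_{Bflat}$ is open and closed by \cite{Ka}.

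For the fourth bullet, each $(\eta_\alpha,w_\alpha)$ is an $\widetilde{SL_{2}(\R)}$-structure, and its developing map yields $\rho_\alpha\in{\mathcal R}(\pi_1M,\widetilde{SL_{2}(\R)})$, depending smoothly on $\alpha$ by the section on $G$-structures. The flat connection $d+\omega_G$ composed with $p$ is the $SU(1,1)$ connection above, so $\pi\circ\tilde\sigma=\sigma$. By Corollary \ref{SLsas} this is the stated class in ${\mathcal S}_{(-,0)}(M)/\sim$. The map $\pi$ is a local diffeomorphism, since $p$ is a covering and $\widetilde{SL_{2}(\R)}\to SL_{2}(\R)$ has central kernel. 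Hence $\tilde\sigma$ is a diffeomorphism onto an open-closed subset, namely a component.

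The main obstacle is the properness and openness step: checking that ${\rm Hom}^s(\pi_1M,SU(1,1))/SU(1,1)$ is smooth of exactly the stated dimension near the image, so that closed plus open gives a component. I would handle this with the spectral-sequence computation of the deformation complex restricted to the real form, where the only surviving real cohomology is $E_2^{1,0}\cong H^{1,0}_B(M,T^{1,0\ast}_M)$.
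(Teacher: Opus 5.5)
Your proposal is correct and follows essentially the same route as the paper: reduction to the left-invariant model via Theorem \ref{uni}, the diagonal harmonic metric from the cyclic symmetry, the maximum-principle bound, Serre duality together with $H^{1}(M,\Theta)\cong H^{0,1}_{B}(M,T^{1,0}_{M})$, the section property $\tau\circ\sigma=\mathrm{id}$ plus the dimension count, and Corollary \ref{SLsas} for the lift; your implicit-function-theorem argument for smoothness in $\alpha$ supplies a detail the paper leaves implicit. One small slip: from $\dot w_{\alpha}=\overline{\dot\alpha}\,\overline{w}$ the first-order Beltrami differential is $-\overline{\dot\alpha}\,\overline{w}\otimes W$ (with the conjugate, not $\dot\alpha\,\overline{w}\otimes W$), so the Kodaira--Spencer map is the conjugate-linear Serre-duality isomorphism, which is still an $\R$-linear isomorphism as required.
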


\begin{remark}
For the unitary matrix $P=\frac{1}{\sqrt{2}}\left(
\begin{array}{cc}
 1& 1 \\
-\sqrt{-1} & \sqrt{-1}
\end{array}
\right)$, we have $SU(1,1)=P^{-1}SL_{2}(\R)P$.
Thus, for the natural global frame $e_{-},e_{+}$ of $E=E_{-\frac{1}{2\pi}} \oplus E_{\frac{1}{2\pi}}$, the global  frame 
$\frac{1}{\sqrt{2}}(h_{\alpha}^{\frac{1}{2}}e_{-}+h_{\beta}^{-\frac{1}{2}}e_{+}), \frac{\sqrt{-1}}{\sqrt{2}}(h^{\frac{1}{2}}e_{-}-h^{-\frac{1}{2}}e_{+})$ gives a real structure on the flat connection  corresponding to the basic Higgs bundle $(E,\theta_{\alpha})$.
That is we have the real flat bundle 
\[\left(E^{\R}=M\times \R^{2}, D_{\alpha}^{\R}=d+P\left(
\begin{array}{cc}
\sqrt{-1} \eta_{\alpha}& w_{\alpha} \\
\overline{w_{\alpha}} & -\sqrt{-1}\eta_{\alpha}
\end{array}
\right)P^{-1}\right).\]

Define the symmetric bilinear form $C$ on $E$ by the matrix presentation  $\left(
\begin{array}{cc}
 0& 1 \\
1 & 0
\end{array}
\right)$ for the global  frame  $e_{-},e_{+}$.
Then, $C$ is non-degenerate, basic holomorphic and satisfying $C(\theta_{\alpha}\otimes id_{E})=C(id_{E}\otimes \theta_{\alpha})$.
We note that the real flat bundle $(E^{\R}, D_{\alpha}^{\R})$ corresponds to $(E,\theta_{\alpha}, C)$ (see \cite{LM}).

\end{remark}

\section{Geometry of ${\mathcal R}(\pi_{1}M, \widetilde{SL_{2}(\R)})$}
Let $(M,\, T^{1,0}_{M} ,\, \eta) $ be a compact  $3$-dimensional Sasakian manifold such that $(\eta, \Phi_{\xi})\in {\mathcal S}_{(-,0)}(M)$.

Consider the algebraic variety  ${\rm Hom}(\pi_{1}(M,x), SL_{2}(\C))$.
By the Goldman-Millson theory \cite{GM} and the almost formality in \cite{KaA}, we can say that the analytic space  ${\rm Hom}(\pi_{1}(M,x), SL_{2}(\C))$ is smooth.
Thus, ${\rm Hom}(\pi_{1}(M,x), SU(1,1))$ is a smooth manifold.
Consider  the map \[p_{\ast}:{\rm Hom}( \pi_{1}(M,x), \widetilde{SL_{2}(\R)})\to  {\rm Hom}(\pi_{1}(M,x), SU(1,1))\] induced by 
the covering   $p:\widetilde{SL_{2}(\R)} \to SU(1,1)$.
Then by the lifting property of $p_{\ast}$  (see \cite[Lemma 2.2]{Gol}), we can say that ${\rm Hom}( \pi_{1}(M,x), \widetilde{SL_{2}(\R)})$ is also a smooth manifold.
It is known that $ {\mathcal R}(\pi_{1}(M,x), \widetilde{SL_{2}(\R)})$ is an open set in ${\rm Hom}( \pi_{1}(M,x), \widetilde{SL_{2}(\R)})$ (see \cite{Wei}). 

We notice that the action of $SU(1,1)$ (resp. $\widetilde{SL_{2}(\R)}$)  on ${\rm Hom}(\pi_{1}M, SU(1,1))$  (resp. ${\rm Hom}( \pi_{1}M, \widetilde{SL_{2}(\R)})$) is reduced to an effective action of $PSU(1,1)$.
The action on ${\rm Hom}^s(\pi_{1}M, SU(1,1))$  is proper (see \cite{JM}) and free  as a $PSU(1,1)$-action.
We can say that the action on  ${\rm Hom}( \pi_{1}M, \widetilde{SL_{2}(\R)})$ is also proper and free  as a $PSU(1,1)$-action.
Thus, the quotient space ${\mathcal R}(\pi_{1}(M,x), \widetilde{SL_{2}(\R)})/\widetilde{SL_{2}(\R)}$ is a smooth manifold.
Now the lifting 
 \[\tilde\sigma: H^{1,0}_{B}(M, T^{1,0\ast}_M)\to {\mathcal R}(\pi_{1}(M,x), \widetilde{SL_{2}(\R)})/\widetilde{SL_{2}(\R)} \]
can be seen as a smooth section of a smooth map $\pi :{\mathcal R}(\pi_{1}(M,x), \widetilde{SL_{2}(\R)})/\widetilde{SL_{2}(\R)}\to {\rm Hom}^s(\pi_{1}(M,x), SU(1,1))/SU(1,1)$ from the connected component $H^{1,0}_{B}(M, T^{1,0\ast}_M)$ of ${\rm Hom}^s(\pi_{1}(M,x), SU(1,1))/SU(1,1)$.

\begin{thm}
The map 
$\tilde\sigma: H^{1,0}_{B}(M, T^{1,0\ast}_M)\to {\mathcal R}(\pi_{1}(M,x), \widetilde{SL_{2}(\R)})/\widetilde{SL_{2}(\R)}$ 
induces  a diffeomorphism from the vector space $H^{1,0}_{B}(M, T^{1,0\ast}_M)$  onto a connected component of ${\mathcal R}(\pi_{1}(M,x), \widetilde{SL_{2}(\R)})/\widetilde{SL_{2}(\R)}$.

\end{thm}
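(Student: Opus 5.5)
The plan is to derive the statement from Theorem \ref{secdef} by a covering-space argument for the smooth map
\[\pi :{\mathcal R}(\pi_{1}(M,x), \widetilde{SL_{2}(\R)})/\widetilde{SL_{2}(\R)}\to {\rm Hom}^s(\pi_{1}(M,x), SU(1,1))/SU(1,1).\]
Write $V=H^{1,0}_{B}(M, T^{1,0\ast}_M)$, and let ${\mathcal C}=\sigma(V)$ be the connected component of ${\rm Hom}^s(\pi_{1}(M,x), SU(1,1))/SU(1,1)$ given by Theorem \ref{secdef}, so that $\sigma: V\to{\mathcal C}$ is a diffeomorphism. We already know $\pi\circ\tilde\sigma=\sigma$ and that $\tilde\sigma$ is smooth, being built from the smooth family $(\eta_{\alpha},w_{\alpha})$ of $\widetilde{SL_{2}(\R)}$-structures through the monodromy construction of Section 3. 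It follows immediately that $\tilde\sigma$ is injective and an immersion, since $d\pi\circ d\tilde\sigma=d\sigma$ is injective.

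The second step is to show that $\pi$ is a local diffeomorphism with discrete fibres. For $\rho\in{\rm Hom}(\pi_{1}(M,x), SU(1,1))$ the lifts $\tilde\rho$ with $p_{\ast}\tilde\rho=\rho$ form a torsor, possibly empty, under ${\rm Hom}(\pi_{1}(M,x),\ker p)={\rm Hom}(\pi_{1}(M,x),\Z)\cong H^{1}(M,\Z)$. This set is discrete. Goldman's lifting property \cite[Lemma 2.2]{Gol} then says that $p_{\ast}$ is a local diffeomorphism onto an open and closed subset, and in fact a covering map over its image. Both conjugation actions factor through $PSU(1,1)$, are free and proper, and $p_\ast$ is equivariant for them. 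Passing to quotients, $\pi$ is a local diffeomorphism between manifolds of the same dimension $2\dim_{\C}V$, and on each connected component of its domain it is a covering onto its image.

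The third step is openness and closedness of $\tilde\sigma(V)$ in ${\mathcal R}(\pi_{1}(M,x), \widetilde{SL_{2}(\R)})/\widetilde{SL_{2}(\R)}$. Openness follows from invariance of domain, since $\tilde\sigma$ is an injective immersion between manifolds of equal dimension. For closedness, I would show that $\pi^{-1}({\mathcal C})$ is a disjoint union of sheets, using the covering property from step two. Since ${\mathcal C}\cong V$ is simply connected, each component of $\pi^{-1}({\mathcal C})$ maps diffeomorphically onto ${\mathcal C}$. Then $\tilde\sigma(V)$ is exactly the sheet through $\tilde\sigma(0)$, which is the inverse of $\pi$ on that sheet composed with $\sigma$. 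That sheet is a connected component, since ${\mathcal C}$ is open and closed and the sheet is closed in $\pi^{-1}({\mathcal C})$. Hence $\tilde\sigma$ is a diffeomorphism onto a connected component.

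The main obstacle is justifying the covering-space property of $\pi$ on ${\mathcal R}$, rather than only the local-diffeomorphism property. One also has to check that ${\mathcal R}$, the faithful cocompact-discrete locus, meets $\pi^{-1}({\mathcal C})$ in a union of whole sheets. I would first try to get this without needing ${\mathcal R}$ to be closed: restrict to the sheet through $\tilde\sigma(0)$ in ${\rm Hom}(\pi_{1}(M,x), \widetilde{SL_{2}(\R)})/\widetilde{SL_{2}(\R)}$. That sheet equals $\tilde\sigma(V)$ by path lifting, using uniqueness of lifts of paths in the discrete-fibre setting. It lies in ${\mathcal R}$ because every $\tilde\sigma(\alpha)$ is the holonomy of a genuine $\widetilde{SL_{2}(\R)}$-Sasakian structure, hence faithful with cocompact discrete image, by Section 3. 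Since the sheet is closed in ${\rm Hom}/\widetilde{SL_{2}(\R)}$, it is also closed in ${\mathcal R}/\widetilde{SL_{2}(\R)}$, and this gives the desired component.
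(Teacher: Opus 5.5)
Your proposal is correct and follows essentially the paper's route: Goldman's lifting lemma plus the free proper $PSU(1,1)$-actions make $\pi$ a local diffeomorphism of manifolds, and $\tilde\sigma$ is then a smooth section of $\pi$ over the component $\sigma(V)$. The covering-space property you flag as the main obstacle is not needed: closedness of $\tilde\sigma(V)$ follows from Hausdorffness of the quotient, since $\tilde\sigma(v_n)\to y$ forces $\pi(y)\in{\mathcal C}$ and $v_n\to\sigma^{-1}(\pi(y))$, hence $y=\tilde\sigma(\sigma^{-1}(\pi(y)))$.
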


\section{Standard cases}
Let  $\Sigma$ be a compact Riemannian surface    of genus $g(\Sigma)\ge 2$.
We uniformize $\Sigma =\Gamma  \backslash \bf H$ such that $\bf H$ is the  upper half plane    and $\Gamma$ is a cocompact discrete subgroup in $PSL_{2}(\R)$.
Take the homogeneous space  $M=\Gamma \backslash PSL_{2}(\R)$.
$M$ admits a natural $\widetilde{SL_{2}(\R)}$-Sasakian structure induced by a left-invariant Sasakian structure.
We notice that $M$ is a principal $S^{1}$-bundle over $\Sigma$ associated with the right $PSO(2)$-action on $\Gamma \backslash PSL_{2}(\R)$,  $M=\pi_1(M, x)\backslash \widetilde{SL_{2}(\R)}$, $\pi_1(M, x)$ is the pullback of  $\Gamma\subset PSL_{2}(\R)$ for the covering $\widetilde{SL_{2}(\R)}\to  PSL_{2}(\R)$
 and we have the central extension 
\[
\xymatrix{
1\ar[r]&\Z\ar[r]&\pi_1(M, e)\ar[r]&
\Gamma\ar[r]&1.}
\]
The representation $\pi_1(M, e)\to  SU(1,1)$ induced by the covering \[\widetilde{SL_{2}(\R)}\to  SL_{2}(\R)\cong SU(1,1)\] sends the central subgroup $\Z$ onto $\{\pm I\}\subset SL(1,1)$ where $I$ is the identity matrix.
We say that a representation $\rho\in {\rm Hom}(\pi_1(M, x), SL_{2}(\C))$ is odd if $\rho$ sends the central subgroup $\Z$ onto $\{\pm I\}\subset SL_{2}(\C)$.
For the space  ${\rm Hom}^{s,odd}(\pi_1(M, x), SL_{2}(\C))$ of simple odd representations, 
 we have 
 \[{\rm Hom}^{s,odd}(\pi_1(M, x), SL_{2}(\C))/SL_{2}(\C)={\mathcal M}^{s}_{Bharm  }(E, D_{\xi})\] for $(E=M\times \C^{2}, D=d+\omega_{\widetilde{SL_{2}(\R)}})$ as in Section \ref{modul} (see \cite[Example 4.5]{Ka}). 
In this case, the section $\sigma :  H^{1,0}_{B}(M, T^{1,0\ast}_M)\to {\mathcal M}^{s}_{Bharm  }(E, D_{\xi})$ gives a connected component of \[{\rm Hom}^{s,odd}(\pi_1(M, x), SU(1,1))/SU(1,1)\] for the space ${\rm Hom}^{s,odd}(\pi_1(M, x), SU(1,1))$ of odd representations into $SU(1,1)$.
Hence, we obtain a connected component of ${\rm Hom}^{s}(\Gamma, PSL_{2}(\R))/PSL_{2}(\R)$  by the pullbacks.
Here $\Gamma $ is the fundamental group of the compact Riemannian surface $\Sigma$.
This means that we obtain the original Hitchin section in \cite{Hit}.

We can obtain "other" sections by the following way.
We have a lifting $\Gamma\subset SL_{2}(\R)$ of $\Gamma \backslash PSL_{2}(\R)$ (see \cite{Pat} or solving the Hitchin equation of the uniformizing Higgs bundle $(K_{\Sigma}^{-\frac{1}{2}}\oplus K_{\Sigma}^{\frac{1}{2}},Id_{K_{\Sigma}^{-1}})$).
Consider the Sasakian manifold  $M_{2}=\Gamma \backslash SL_{2}(\R)$ which is different from $M$.
Then  $\pi_1(M_{2}, x)$ is the subgroup in   $\pi_1(M, x)$ 
defined by  the central extension 
\[
\xymatrix{
1\ar[r]&2\Z\ar[r]&\pi_1(M_{2}, e)\ar[r]&
\Gamma\ar[r]&1.}
\]
We obtain a connected component ${\rm Hom}^s(\pi_{1}(M_{2},x), SU(1,1))/SU(1,1)$ by the section $\sigma :  H^{1,0}_{B}(M_{2}, T^{1,0\ast}_{M_{2}})\to {\mathcal M}^{s}_{Bharm  }(E, D_{\xi})$.

\end{document}